\newtheorem{theorem}{Theorem}[section]
\newtheorem{lemma}[theorem]{Lemma}
\newtheorem{corollary}[theorem]{Corollary}
\newtheorem{proposition}[theorem]{Proposition}
\theoremstyle{definition}
\newtheorem{assumption}[theorem]{Assumption}
\newtheorem{remark}[theorem]{Remark}
\numberwithin{equation}{section}
\theoremstyle{plain}
\numberwithin{equation}{section} 
\numberwithin{figure}{section} 
\theoremstyle{plain}
\theoremstyle{plain}
\theoremstyle{remark}
\newtheorem*{acknowledgement*}{Acknowledgement}
\theoremstyle{example}
\newcommand{\cF}{{\mathcal F}}
\newcommand{\cG}{{\mathcal G}}
\newcommand{\cH}{{\mathcal H}}
\newcommand{\cX}{{\mathcal X}}
\newcommand{\cY}{{\mathcal Y}}
\newcommand{\cZ}{{\mathcal Z}}
\newcommand{\te}{{\theta}}
\newcommand{\Om}{{\Omega}}
\newcommand{\om}{{\omega}}
\newcommand{\ve}{{\varepsilon}}
\newcommand{\del}{{\delta}}
\newcommand{\sig}{{\sigma}}
\newcommand{\ka}{{\kappa}}
\newcommand{\la}{{\lambda}}
\newcommand{\bbC}{{\mathbb C}}
\newcommand{\bbE}{{\mathbb E}}
\newcommand{\bbN}{{\mathbb N}}
\newcommand{\bbP}{{\mathbb P}}
\newcommand{\bbR}{{\mathbb R}}
\newcommand{\bbZ}{{\mathbb Z}}
\begin{document}
\title[]{Limit theorems for inhomogeneous $\phi$-mixing Markov chains\
}  
 \vskip 0.1cm
 \author{Yeor Hafouta and Brendan Williams}
\address{
Department of Mathematics, The University of Florida}
\email{yeor.hafouta@mail.huji.ac.il, brendan.williams@ufl.edu }%

\thanks{ }
\dedicatory{  }
 \date{\today}

\maketitle
\markboth{Y. Hafouta Brendan Williams}{ } 
\renewcommand{\theequation}{\arabic{section}.\arabic{equation}}
\pagenumbering{arabic}

\begin{abstract}
We prove limit theorems for inhomogeneous $\phi$-mixing Markov chains. In the case of uniform $\phi$-mixing, we will focus on the purely sequential (inhomogeneous) setting, while in the non-uniform case we will focus on Markov chains in random dynamical environments. A large part of the paper is devoted to verifying our main results for Markov chains satisfying a lower Doeblin condition and for Markov chains satisfying a Dobrushin-type contraction condition. We then apply our results in the random environment case to prove estimates on random mixing times as well as limit theorems for Markovian skew products. In fact, we show that one can estimate the $\alpha$-mixing coefficients of the coordinates of the skew product, which opens the door to a variety of limit theorems. Compared with \cite{dolgopyat2023berry}, we are able to prove optimal CLT rates for inhomogeneous Markov chains solely under $\phi$-mixing, without any ellipticity assumptions. Compared with existing results for Markov chains in random ergodic environments, under mixing conditions on the environment, we are able to treat non-uniformly $\phi$-mixing chains, including non-uniform Doeblin/Dobrushin conditions, while all existing results either hold for uniformly mixing chains or are formulated under hard-to-verify conditions.
\end{abstract}



\section{Introduction}
An important discovery made in the last century is that weakly dependent random variables could satisfy the central limit theorem (CLT). 
The CLT, together with convergence rates, was obtained for various classes of stationary Markov chains (see \cite{HH}). 
However, many systems appearing in nature are non-stationary due to an interaction with the outside world.

The first result for non-stationary  Markov chains is due to Dobrushin \cite{Dub56}, where he proved that appropriately normalized partial sums $\sum_{j=1}^nY_j$ of sufficiently well mixing (contracting) Markov chains converge in distribution to the standard normal law (namely, they obey the CLT). We refer to \cite{PelCLT, SV} for a modern presentation and strengthening of Dobrushin's CLT. Since then, the central limit theorem in the inhomogeneous case has been studied extensively for uniformly bounded functionals of uniformly elliptic (in an appropriate sense)  Markov chains; see \cite{dolgopyat2023berry, dolgopyat2025berry, dolgopyat2024local, DS, merlevede2021local, merlevede2022local} for local CLT and optimal CLT rates.  See also references therein for many other related results for non-stationary dynamical systems, which by a standard time reversal argument can be reduced to Markov chains whose transition operators are the appropriately normalized transfer operators.
In this paper, we will concentrate on inhomogeneous chains which are not elliptic. More precisely, we will focus on chains satisfying a Doeblin minorization condition. 
We revisit the results in \cite{dolgopyat2023berry} and \cite{dolgopyat2025berry} and prove Berry--Esseen theorems for uniformly bounded random variables of the form $Y_j=f_j(X_j)$, where $X_j$ is an inhomogeneous $\phi$-mixing Markov chain. In particular, we show that the optimal CLT rates in \cite{dolgopyat2025berry} hold without uniform ellipticity, assuming instead only  $\phi$-mixing. 

We then study in detail two examples: Markov chains satisfying a uniform version of the Doeblin minorization condition \eqref{Doeb2} (i.e., the lower bound from the usual two-sided Doeblin condition/ellipticity), and Markov chains which are uniformly contracting in the sense of Dobrushin.
 Our approach also yields CLT with rates under some kind of sufficiently moderate, non-uniform sequential minorization condition. For instance, we can require that the Doeblin lower bound or the contraction will occur at time $j$ only after $n_j$ steps, where $n_j\to\infty$ slowly enough for the arguments to apply. 
Since the resulting conditions are not very aesthetic,  we decided not to formulate such results directly, although we provide explicit estimates in Corollaries \ref{Cor} and \ref{Cor2}, and so the readers who are interested could easily use these results together with the main results in \cite{HafSPA}, for instance.

One notable inhomogeneous example is Markov chains in random dynamical environments, which is a particular case of a random dynamical system.
Random transformations emerge in a natural way as a model
for the description of a physical system whose evolution mechanism depends on
time in a stationary way. This leads to the study of actions of compositions
of different maps or Markov operators chosen at random from a typical sequence of transformations. To fix the notation, we have an underlying probability space
$(\Om,\cF,\bbP)$ and a probability preserving map $\te:\Om\to\Om$ so that the $n$-th  Markov operator is given by 
$$
P_{\om,n}:=P_{\om}\circ P_{\te\om}\circ\cdots\circ P_{\te^{n-1}\om}.
$$
A dynamical version of this setup was discussed already in Ulam and Von-Neumann \cite{UN} and in Kakutani \cite{Ka} in connection with random ergodic theorems. 
Since then, the ergodic theory of random dynamical systems has attracted a lot of attention. See, for instance \cite{Arnold98, Cong97, Crauel2002, Kifer86, KiferLiu, LiuQian95}. We refer to the introduction of \cite[Chapter 5]{KiferLiu} for a historical discussion and applications to, for instance, statistical physics, economics, and meteorology.  

The central limit theorem and related results for Markov chains in random dynamical environments were studied in \cite{Cogburn,HK, kifer1996perron, kifer1998limit} and references therein. Similar results were obtained for wide classes of random hyperbolic systems. The research on random hyperbolic maps has exploded in recent years, and so we make no attempt in providing even a partial list. In this paper, we will use the aforementioned general sequential approach to prove limit theorems for Markov chains in random mixing dynamical environments that satisfy a certain type of non-uniformly $\phi$-mixing condition. We then study Markov chains in random environments satisfying either a non-uniform version of the lower Doeblin condition or a non-uniform random Dobrushin-type contraction. Here, the non-uniformity means that the time it takes to get the lower bound/contraction and the size of the lower bound/contraction are both random.  In applications, this is done by imposing some quantitative mixing conditions on the random noise $(\Om,\cF,\bbP,\te)$. 

Compared with \cite{Cogburn, kifer1996perron, kifer1998limit}, we are able to provide explicit conditions for the CLT, CLT rates, and large deviations in the non-uniform case. In the above applications, the main novelty in this paper is that we obtain effective polynomial or stretched exponential random ``geometric" ergodicity. Once this is achieved, the rest of the proof in the random case follows from the abstract results of \cite{YH1} (with minor notational modifications which are left for the reader).
The effective mixing rates also have applications to related problems, such as (average) mixing times and corresponding results for the skew product process, as will be discussed at the end of the paper. In examples, by assuming a lower Doeblin condition or a Dobrushin contraction, we are able to directly obtain results in total variation (or just direct estimates on the $\phi$-mixing coefficients), consequently avoiding many of the techniques and methods necessary in more general settings. These conditions are also often easier to verify than those for other quantitative limit theorems, which typically require regularity assumptions such as H\"older continuity or bounded distortion.

Note that the version of the  Doeblin condition that we use in this paper is stronger than the classical small-set ($C$-set) definition in Harris chain theory (see \cite{Brad}). In contrast, we assume that the Doeblin-type lower bound conditions hold \textit{uniformly}  for all states $x$. It would be interesting to consider the results of this paper under more general Doeblin minorization conditions.

\section{Main Result for Uniformly Mixing Inhomogeneous Markov Chains}
Let $\{X_j\}_{j\geq 0}$ be a Markov chain taking values at measurable spaces $\cX_j$. Denote 
$\cF_{k,\ell}=\sigma\{X_j: k\leq j\leq \ell, j\in\bbR\}$.
Recall that the $n$-th $\phi$-mixing coefficient of the chain is given by 
$$
\phi(n)=\sup_{j}\sup \left\{ \left|\mathbb P(B\,|\,A)-\mathbb P(B)\right|: A\in\mathcal F_{-\infty,j}, B\in\mathcal F_{j+n,\infty}, \mathbb P(A)>0\right\}.
$$
Let us take measurable functions $f_j:\cX_j\to\mathbb R, j\geq 0$ such that $\sup_j\|f_j(X_j)\|_{L^\infty}<\infty$. Denote 
$$
S_n=\sum_{j=0}^{n-1}f_j(X_j)
$$
and let $\sig_n=\sqrt{\text{Var}(S_n)}$.

\begin{theorem}\label{VarThm}
Suppose that $\phi(n)=O(\delta^n)$ for some $\delta\in(0,1)$.
The following conditions are equivalent: 
\vskip0.1cm
(i) $\sig_n$ is bounded;
\vskip0.1cm
(ii) $\liminf_{n\to\infty}\sig_n<\infty;$
\vskip0.1cm
(iii) we have
 $$
f_j(X_j)=\bbE\left[f_j(X_j)\right]+h_j(X_j)-h _{j-1}(X_{j-1})+M_j(X_{j-1},X_j),\, j>0
 $$
 for some measurable uniformly bounded functions $h_j$, and where $M_j$ is a uniformly bounded martingale difference such that $\sum_{j=1}^\infty\text{Var}\,(M_j(X_{j-1},X_j))<\infty$ (and so $\sum_{j=1}^\infty M_j(X_{j-1},X_j)$ converges almost surely and in $L^p$ for all $1\leq p<\infty$).
\end{theorem}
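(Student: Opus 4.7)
The plan is to prove the cycle $(iii) \Rightarrow (i) \Rightarrow (ii) \Rightarrow (iii)$, with only the last implication requiring real work. For $(iii) \Rightarrow (i)$, set $g_j = f_j - \bbE[f_j(X_j)]$ and telescope the hypothesized decomposition to obtain
$$S_n - \bbE[S_n] = g_0(X_0) + h_{n-1}(X_{n-1}) - h_0(X_0) + \sum_{j=1}^{n-1} M_j(X_{j-1},X_j).$$
Since the $h_j$ are uniformly bounded and the $M_j$ are mutually orthogonal in $L^2$ (being a martingale difference sequence), the $L^2$ triangle inequality yields $\sig_n \leq C + (\sum_{j=1}^{\infty}\text{Var}(M_j))^{1/2} < \infty$. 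The implication $(i)\Rightarrow (ii)$ is immediate.

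For the main direction $(ii)\Rightarrow(iii)$, the crucial input is the geometric mixing extracted by iterating \eqref{Doeb2}: the Doeblin condition forces the Dobrushin contraction coefficient of $P_{j,n_0}$ to be at most $1-\gam$, and submultiplicativity of this coefficient then gives
$$\left|\bbE[g_k(X_k)\mid X_j=x] - \bbE[g_k(X_k)]\right| \leq 2\|f_k\|_\infty (1-\gam)^{\lfloor (k-j)/n_0 \rfloor}$$
uniformly in $x$ and in the initial distribution. Using this estimate, define the Gordin-type correctors
$$h_j(x) := -\sum_{k=j+1}^{\infty} \left(\bbE[g_k(X_k)\mid X_j=x] - \bbE[g_k(X_k)]\right),$$
which converge absolutely and are uniformly bounded by a geometric tail. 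A direct tower/Markov computation gives $\bbE[g_j(X_j) - h_j(X_j)\mid X_{j-1}] = -h_{j-1}(X_{j-1})$, so that $M_j := g_j(X_j) - h_j(X_j) + h_{j-1}(X_{j-1})$ is a uniformly bounded martingale difference relative to $\cF_j = \sigma(X_0,\dots,X_j)$, and it is by construction a function of $(X_{j-1},X_j)$; rearranging is exactly the decomposition in $(iii)$.

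Finally, to verify $\sum_j \text{Var}(M_j) < \infty$ under hypothesis $(ii)$, telescope in reverse:
$$\sum_{j=1}^{n-1} M_j = (S_n - \bbE[S_n]) - g_0(X_0) + h_{n-1}(X_{n-1}) - h_0(X_0),$$
and combine the $L^2$ triangle inequality with mutual orthogonality of the $M_j$ to obtain $(\sum_{j=1}^{n-1}\text{Var}(M_j))^{1/2} \leq \sig_n + C$, with $C$ depending only on the uniform bounds on $f_j$ and $h_j$. Passing to a subsequence realizing the liminf in $(ii)$ keeps the right-hand side bounded, forcing the full sum to be finite; almost sure and $L^p$ convergence of $\sum_j M_j$ then follow from summable variance together with uniform boundedness of the martingale differences. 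The main obstacle throughout is the construction and uniform boundedness of the correctors $h_j$, which rests squarely on the effective geometric ergodicity derived from \eqref{Doeb2}; everything else is standard Gordin-decomposition bookkeeping.
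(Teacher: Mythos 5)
Your proposal is correct and follows essentially the same route as the paper: the paper proves Theorem \ref{VarThm} via Lemma \ref{Mart lemm}, which builds exactly your Gordin correctors $h_j=\sum_{k>j}\bbE[\tilde f_k(X_k)\mid X_j]$ (up to a sign) and bounds them using the geometric contraction coming from the iterated Doeblin minorization (Corollary \ref{Cor2}); your use of the Dobrushin contraction coefficient is just a cosmetic variant of that step, and the telescoping/orthogonality bookkeeping relating $\sig_n$ to $\sum_j\text{Var}(M_j)$, which the paper leaves implicit, is exactly what you supply (modulo an immaterial sign slip in the reverse telescoping, where the boundary term should read $-h_{n-1}(X_{n-1})+h_0(X_0)$).
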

Next, denote 
$$
F_n(t)=\bbP(S_n-\bbE\left[S_n\right]\leq t\sig_n).
$$
\begin{theorem}\label{BE}
Suppose that $\phi(n)=O(\delta^n)$ for some $\delta\in(0,1)$ and that $\sig_n\to\infty$. Then: 

\vskip0.2cm
(i)  for every $1\leq s<\infty$, there is a constant $C_s$ such that for all $n\in\bbZ^+$,
$$
\sup_{t\in\bbR} \left(1+|t|^s \right)\left|F_n(t)-\Phi(t)\right|\leq C_s\sig_n^{-1}.
$$ 
\vskip0.1cm
(ii) for all $q>0$, we have 
$
\left\|F_n-\Phi\right\|_{L^q(dt)}=O(\sig_n^{-1}).
$
\vskip0.1cm
(iii)  for all  $1\leq s<\infty$,  there is a constant $C_s$ such that
for every absolutely continuous function $h:\bbR\to\bbR$  such that 
$H_s(h):=\int \frac{|h'(x)|}{1+|x|^s}dx<\infty$ and for every $n\in\bbN$, we have 
$$
\left|\bbE\left[h((S_n-\bbE[S_n])/\sig_n)\right]-\int h\, d\Phi\right|\leq C_s H_s(h)\sig_n^{-1}.
$$
\end{theorem}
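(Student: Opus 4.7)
My plan is to reduce Theorem \ref{BE} to the corresponding Berry--Esseen bounds in \cite{DolgHaf PTRF 2}, whose proofs are written under a two-sided uniform ellipticity condition but only genuinely rely on the one-sided contraction coming from the Doeblin lower bound. The key input, which is the ``simple lemma'' advertised in the title, is that iterating \eqref{Doeb2} yields the uniform geometric contraction
\[
\sup_{j \geq 0}\, \sup_{\mu,\nu}\, \|P_{j,k}^*\mu - P_{j,k}^*\nu\|_{TV} \leq 2(1-\gamma)^{\lfloor k/n_0\rfloor}, \qquad k \geq 1,
\]
by the standard coupling/minorization argument. This supplies the mixing rate used in the Dolgopyat--Hafouta scheme.

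Given this contraction, I would first pass to the $n_0$-step sub-sampled chain $\widetilde X_j := X_{jn_0}$, which satisfies a one-step Doeblin condition, and group the original summands into blocks $\widetilde Y_j := \sum_{k=jn_0}^{(j+1)n_0-1} f_k(X_k)$. Since each $\widetilde Y_j$ is a bounded functional of $(X_{jn_0}, X_{jn_0+1},\ldots, X_{(j+1)n_0-1})$, one may enlarge the state space to obtain a one-step Markov chain on which the Doeblin condition holds with $n_0 = 1$. Writing $S_n = \sum_j \widetilde Y_j + R_n$ with $|R_n| \leq C n_0$ bounded uniformly in $n$, the remainder contributes $O(\sigma_n^{-1})$ to $F_n$ after normalization and can be ignored for the purposes of all three parts of the theorem.

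On this block chain the hypotheses of \cite{DolgHaf PTRF 2} are met, so part (i) follows. Part (ii) is obtained by splitting $\int |F_n - \Phi|^q\, dt$ at $|t| = T$ and using (i) with $s$ large enough to make $\int (1+|t|^s)^{-q}\, dt$ finite. Part (iii) comes from integration by parts: with $Z_n := (S_n - \bbE[S_n])/\sigma_n$,
\[
\bbE[h(Z_n)] - \int h\, d\Phi = -\int h'(t)\bigl(F_n(t) - \Phi(t)\bigr)\, dt,
\]
and inserting the bound in (i) with matching $s$ produces the desired $C_s H_s(h)\sigma_n^{-1}$ estimate.

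The principal difficulty is bookkeeping: one must verify that every place where \cite{DolgHaf PTRF 2} invokes two-sided ellipticity can be replaced by the one-sided contraction displayed above. The structural estimates there (martingale approximation, characteristic function decomposition, Nagaev--Guivarc'h-type iteration, and the variance lower bounds of Theorem \ref{VarThm}) all ultimately depend on how fast the Markov kernels contract in total variation, which is precisely what \eqref{Doeb2} furnishes after $n_0$ steps. Once this substitution is checked, the proof of Theorem \ref{BE} becomes essentially mechanical.
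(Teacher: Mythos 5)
There is a genuine gap at the decisive step, namely the sentence ``On this block chain the hypotheses of \cite{DolgHaf PTRF 2} are met, so part (i) follows.'' They are not met: the results of \cite{DolgHaf PTRF 1, DolgHaf PTRF 2} are proved under a \emph{two-sided} uniform ellipticity assumption (transition densities with respect to reference measures, bounded above as well as below), and the one-sided minorization \eqref{Doeb2} --- even after sub-sampling and blocking so that it becomes a one-step condition on an enlarged state space --- yields neither the existence of densities nor any upper bound. Removing the upper bound is exactly what Theorem \ref{BE} claims, so it cannot be discharged by citation; and your fallback, that one ``must verify that every place where \cite{DolgHaf PTRF 2} invokes two-sided ellipticity can be replaced by the one-sided contraction,'' is precisely the missing mathematical content rather than bookkeeping. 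The upper bound in those papers is not used only to produce total-variation contraction; it also feeds the moment and variance estimates and the control of the complex-perturbed transition operators. The paper substitutes different ingredients: by Corollary \ref{Cor2} the chain is $\phi$-mixing with a geometric rate, so \cite[Theorem 6.17]{PelBook} gives $\big\|\sum_{\ell}(f_\ell(X_\ell)-\bbE[f_\ell(X_\ell)])\big\|_{L^k}\leq c_k\big(1+\big\|\sum_{\ell}(f_\ell(X_\ell)-\bbE[f_\ell(X_\ell)])\big\|_{L^2}\big)$, and with a blocking into blocks of approximately constant \emph{variance} (not blocks of fixed length $n_0$, as you propose) one controls the derivatives of the sequential pressure functions via the analytic perturbation result \cite[Theorem D.2]{DolgHaf PTRF 2}, following \cite[Lemma 33, Proposition 30]{DolgHaf PTRF 1}.

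Your route also aims at the wrong target for part (i): the Berry--Esseen theorems of \cite{DolgHaf PTRF 2} give the classical sup-norm bound, not the weighted bound $\sup_t(1+|t|^s)|F_n(t)-\Phi(t)|\leq C_s\sig_n^{-1}$, so even granting the citation you would not obtain (i) in the stated form; your derivations of (ii) and (iii) from a weighted (i) are fine, but the weighted (i) itself is never established. The paper instead reduces all of Theorems \ref{BE} and \ref{ThWass} to Proposition \ref{Growth Prop}, i.e.\ to the bound $\sup_{|t|\leq\del_k}|\Lambda_n^{(s)}(t)|\leq C_k\sig_n^2$ for a branch $\Lambda_n$ of $\ln\bbE[e^{itS_n}]$ near the origin, and then invokes \cite[Theorems 5 and 9, Corollary 11]{H}, which are designed exactly to convert such derivative bounds into the non-uniform, $L^q$, integral and Wasserstein estimates. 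Your opening step (iterating \eqref{Doeb2} to obtain uniform geometric contraction in total variation, as in Corollary \ref{Cor2}) and the elementary reductions of (ii)--(iii) to (i) are sound, but the core of the proof --- obtaining the weighted rate without the upper ellipticity bound --- is missing.
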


Next, recall that the $p$-th Wasserstein distance between two probability measures $\mu,\nu$ on $\bbR$ with finite absolute moments of order $p$ is given by 
$$
W_p(\mu,\nu)=\inf_{(X,Y)\in\mathcal C(\mu,\nu)}\left\|X-Y\right\|_{L^p},
$$
where $\mathcal C(\mu,\nu)$ is the class of all  pairs of random variables $(X,Y)$ on $\bbR^2$ such that $X$ is distributed according to $\mu$, and $Y$ is distributed according to $\nu$. 

\begin{theorem}\label{ThWass}
Suppose that $\phi(n)=O(\delta^n)$ for some $\delta\in(0,1)$, and that $\sig_n\to\infty$. 
Then, for every $p<\infty$ we have 
$$
W_p (dF_n, d\Phi)=O (\sig_n^{-1}),
$$
where $dG$ denotes the measure induced by a distribution function $G$.
\end{theorem}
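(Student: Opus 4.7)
The plan is to deduce the $W_p$ bound from the CDF and smooth-test-function estimates in Theorem~\ref{BE}, via the quantile (optimal) coupling of $\brX_n := (S_n-\bbE S_n)/\sig_n$ with a standard normal $Z$. In dimension one this coupling satisfies
$$W_p^p(dF_n,d\Phi) = \int_0^1 \left|F_n^{-1}(u) - \Phi^{-1}(u)\right|^p du,$$
which I will estimate by splitting the unit interval into a bulk and a tail.

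The case $p=1$ is immediate: by the identity $W_1=\int_\bbR |F_n(t)-\Phi(t)|\,dt$ combined with Theorem~\ref{BE}(i) for any fixed $s>1$ (to render the integral convergent at infinity), $W_1=O(\sig_n^{-1})$; equivalently, Theorem~\ref{BE}(iii) applied to $1$-Lipschitz test functions together with Kantorovich--Rubinstein duality yields the same bound. As preparation for the general case I would record the uniform moment bound $\bbE|\brX_n|^q \le C_q < \infty$ for every $q<\infty$, which follows by taking $s$ large in Theorem~\ref{BE}(i) and integrating the tail estimate $\bbP(|\brX_n|>t) \le 2(1-\Phi(t)) + 2C_s/((1+t^s)\sig_n)$ against $t^{q-1}$.

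For $p>1$, I would substitute $u=\Phi(z)$ and split at a cutoff $T_n = c_p\sqrt{\log\sig_n}$ depending on $p$. On the bulk $|z|\le T_n$, the identity $F_n(H_n(z))=\Phi(z)$ with $H_n:=F_n^{-1}\circ\Phi$ together with the mean value theorem gives the pointwise bound $|H_n(z)-z| \le C_s/((1+|H_n(z)|^s)\phi(\xi)\sig_n)$ for some $\xi$ between $z$ and $H_n(z)$. On the tail $|z|>T_n$, Hölder's inequality together with the uniform moment bound yields
$$\int_{|z|>T_n} |H_n(z)-z|^p \phi(z)\,dz \le \left(\bbE|\brX_n - Z|^{2p}\right)^{1/2}\bbP(|Z|>T_n)^{1/2} = O\!\left(\sig_n^{-c_p^2/4}\right),$$
and choosing $c_p\ge 2\sqrt{p}$ makes this contribution at most $\sig_n^{-p}$.

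The main obstacle is controlling the bulk integral: the pointwise bound contains the factor $1/\phi(\xi)$, which grows like $e^{z^2/2}$ as $|z|\to\infty$, while the polynomial factor $(1+|z|^s)^{-1}$ from Theorem~\ref{BE}(i) only compensates polynomially. Making the bulk contribution also $O(\sig_n^{-p})$ requires choosing $s$ sufficiently large (depending on $p$) and carefully optimising the cutoff $T_n$; this is the delicate technical step, and exploits the fact that $F_n$ is concentrated on a region whose effective size can be controlled via the non-uniform Berry-Esseen weight. Once the two sides balance, $W_p^p = O(\sig_n^{-p})$ and hence $W_p = O(\sig_n^{-1})$ follows.
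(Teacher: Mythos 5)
Your plan has a genuine gap, and it sits exactly at the step you flag as ``delicate'': the bulk integral cannot be closed with the information you are using. From the quantile identity $\Phi(z)=F_n(H_n(z))$ the mean value theorem gives $|H_n(z)-z|\le C_s\sig_n^{-1}\big(1+|H_n(z)|^s\big)^{-1}\phi(\xi)^{-1}$, and the factor $\phi(\xi)^{-1}\approx e^{z^2/2}$ is only polynomially compensated by the weight, so the integrand of $\int_{|z|\le T_n}|H_n(z)-z|^p\phi(z)\,dz$ behaves like $\sig_n^{-p}e^{(p-1)z^2/2}(1+|z|^s)^{-p}$. With $T_n=c\sqrt{\log\sig_n}$ the bulk term is of order $\sig_n^{-p+(p-1)c^2/2}$ (up to powers of $\log\sig_n$), which is $O(\sig_n^{-p})$ only if $c$ is essentially $0$ when $p>1$, while your tail estimate forces $c^2\ge 4p$. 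No choice of $s$ and $T_n$ reconciles the two, because a polynomially weighted Kolmogorov bound gives no control of the quantile discrepancy beyond $|z|\approx\sqrt{2\log\sig_n}$.

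The obstruction is not merely technical: the conclusion $W_p(dF_n,d\Phi)=O(\sig_n^{-1})$ simply does not follow from the conclusions of Theorem \ref{BE} together with uniform moment bounds, so any proof that uses only those inputs must fail. Indeed, let $\nu_\ve$ be the measure obtained from $\Phi$ by displacing mass $\ve$ near the origin by distance $1$. Then $\sup_t(1+|t|^s)|F_{\nu_\ve}(t)-\Phi(t)|\le C_s\ve$ for every $s$, the smooth-test-function bound of Theorem \ref{BE}(iii) holds with $O(\ve)$, and all moments are uniformly bounded, yet $W_p(\nu_\ve,\Phi)\asymp\ve^{1/p}\gg\ve$ for $p>1$. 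In other words, CDF-level bounds at scale $\ve$ only yield $W_p^p=O(\ve)$, i.e.\ $W_p=O(\sig_n^{-1/p})$ (this is what your $p=1$ argument generalizes to), not $W_p=O(\sig_n^{-1})$. This is why the paper does not deduce Theorem \ref{ThWass} from Theorem \ref{BE}: both are obtained from the abstract results of \cite{H} (Theorems 5 and 9 and Corollary 11 there), whose hypothesis is the characteristic-function input of Proposition \ref{Growth Prop}, namely a branch $\Lambda_n$ of $\ln\bbE[e^{itS_n}]$ near $0$ with $|\Lambda_n^{(s)}(t)|\le C_k\sig_n^2$; that Edgeworth-type information is strictly stronger than the Kolmogorov bound and is what makes the $O(\sig_n^{-1})$ rate in $W_p$ for all $p<\infty$ accessible. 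To repair your argument you would have to inject such characteristic-function (or Edgeworth expansion) estimates, at which point you are essentially reproducing the paper's route through \cite{H}.
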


\subsection{Example 1: Doeblin Chains}\label{Intro1}
\begin{lemma}\label{LLLL1}
Let the    
  following uniform version of the Doeblin minorization condition hold: there exists $n_0\in\bbZ^+$, probability measures $m_j$ on $\cX_j$, and $\gamma\in(0,1)$ such that for every $j\geq0$, $x\in\cX_j$, and every measurable set $\Gamma\subseteq\cX_{j+n_0}$ we have
\begin{equation}\label{Doeb2}
P_{j,n_0}(x,\Gamma):=\bbP(X_{j+n_0}\in\Gamma\,|\,X_j=x)\geq\gamma m_{j+n_0}(\Gamma).    
\end{equation}
Then 
$$
\phi(n)\leq 2\left(1-\gamma\right)^{-n_0+1} \left(1-\gamma\right)^{n/n_0}.
$$
 \end{lemma}
\begin{proof}
Clearly, by replacing $P_{i}$ with $P_{i,n_0}$, we can just assume that $n_0=1$. In this case, let us write
 $$
 P_i(x,\cdot) = \gamma_i m_i(\cdot) + (1-\gamma)Q_i(x,\cdot)
$$ 
for some positive transition measures  $Q_i(x,\cdot)$ such that $Q_i(x,\cX_i)\leq1$. Next, let us denote by $P_{j,n}(x,\cdot)$ the transition probability of $X_{j+n}$ given $X_j=x$. Namely, as operators, we have 
$$
P_{j,n}=P_j\circ P_{j+1}\circ\cdots \circ P_{j+n-1}.
$$
We define $Q_{j,n}$ similarly.

Now, we claim that for every $j\in\bbZ$, any measurable set $\Gamma\subseteq \mathcal{X}_j$, for every $x\in\cX_{j-n}$ we have
   \begin{equation}\label{claim}
   P_{j-n,n}(x,\Gamma)=A_{j,n}(\Gamma)+(1-\gamma)^nQ_{j-n,n}(x,\Gamma),    
   \end{equation}
   where $A_{j,n}(\Gamma)$ are positive measures such that $A_{j,1}(\Gamma)=\gamma m_j(\Gamma)$ and $A_{j,n}(\Gamma)\leq A_{j,n+1}(\Gamma)$.
   In particular,
   $$
\sup_{\Gamma}\sup_x\left|P_{j-n,n}(x,\Gamma)-A_{j,n}(\Gamma)\right|\leq \left(1-\gamma \right)^n.
   $$
   Therefore,  there exists a probability measure $\mu_j$ on $\cX_j$ such that 
    $$
\sup_{\Gamma}\sup_x\left|P_{j-n,n}(x,\Gamma)-\mu_j(\Gamma)\right|\leq (1-\gamma)^n.
   $$

Now let us prove \eqref{claim}.
By relabeling the indices, it is enough to prove the lemma when $j=0$. Let us prove it by induction on $n$. For $n=1$, the lemma is equivalent to the Doeblin minorization condition. Next, suppose that the statement of the lemma holds with a specific $n$. Then
$$
P_{-n-1,n+1}(x,\Gamma)=\int P_{-n-1}(x,dy)P_{-n,n}(y,\Gamma)
$$
$$
=\int \left((\gamma m_{-n-1}(dy)+(1-\gamma)Q_{-n-1}(x,dy))\,(A_{0,n}(\Gamma)
+\gamma(1-\gamma)^n Q_{-n,n}(y,\Gamma)\right)
$$
$$
=\gamma_{-n-1}A_{0,n}(\Gamma)+\gamma(1-\gamma)^n\int Q_{-n-1}(y,\Gamma)\,dm_{-n-1}(y)
$$
$$
+(1-\gamma_{-n-1})A_{0,n}(\Gamma)+(1-\gamma)^{n+1}\int Q_{-n-1}(x,dy)\,Q_{-n,n}(y,\Gamma)
$$
$$
=A_{n}(\Gamma)+C_n(\Gamma)+(1-\gamma)^{n+1}Q_{-n-1,n+1}(x,\Gamma)
$$
for some $C_n(\Gamma)\geq 0$. 

Now, notice that 
 for a transition probability $Q$, a bounded measurable function $g$, and a probability measure $\nu$, we have 
 \begin{equation}\label{lll}
 \left|\int g(y)Q(x,dy)-\int g(y)d\nu(y)\right|\leq 2\sup|g|\sup_{\Gamma} \left|Q(x,\Gamma)-\nu(\Gamma)\right|    
 \end{equation}
 where $\Gamma$ ranges over all the underlying measurable sets. In the derivation of \eqref{lll}, we used that the total variation distance between two probability measures $\ka_1$ and $\kappa_2$ is given by 
 $$
\left\|\kappa_1-\kappa_2\right\|_{\text{TV}}=2\sup_{\Gamma}|\kappa_1(\Gamma)-\kappa_2(\Gamma)|
 $$
 and that, in general,
 $$
\left\|\kappa_1-\kappa_2\right\|_{\text{TV}}=\sup_{\left\|g\right\|_\infty\leq 1}\left|\int g\,d\kappa_1-\int g\,d\kappa_2\right|.
 $$
Therefore, by \eqref{claim} we get that
\begin{equation}\label{lll1}
\left\|P_{j,n}(G)-\mu_{j+n}(G)\right\|_{\infty}\leq 2 \left(1-\gamma \right)^n\left\|G\right\|_{\infty},    
\end{equation}
where $\|G\|_\infty=\sup|G|$.

Next, by  \cite[Ch.4]{Brad}, for every two sub-$\sig$-algebras $\cH,\cG$ of a given $\sig$-algebra,
 \begin{equation}\label{Phi rep}
 \phi(\cH,\cG)=\sup\left\{|\bbP(B\,|\,A)-\bbP(B)|: A\in\cG, B\in\cH, \bbP(A)>0\right\}    
 \end{equation}
$$
=\frac12\sup\left\{\left\|\bbE\left[g\,|\,\cG\right]-\bbE\left[g\right]\right\|_{L^\infty}:g\in L^\infty(\cH): \left\|g\right \|_{L^\infty}\leq1 \right\}. 
 $$
Now, let us take a function on $\cX_{j+n}$ such that $\|G(X_{j+1})\|_{L^\infty}\leq 1$. By possibly reducing $\cX_{j+n}$, we can assume that $|G|\leq 1$ (this will not change the left-hand side in the formula below).  Then
$$
\left\|\mathbb E[G(X_{j+j}) \,|\,X_j]-\mathbb E[G(X_{j+n})]\right\|_{L^\infty}\leq \left\|P_{j,n}G-\mu_{j+n}(G)\right\|_{L^\infty}+ \left|\mu_{j+n}(G)-\mathbb E[G(X_{j+n})]\right|.
$$
Now, notice that 
$$
\mu_{j+n}(G)-\mathbb E[G(X_{j+n})]=\mu_{j+n}(G)-\mathbb E[P_{j,n}G(X_{j})],
$$
and so 
$$
\left|\mu_{j+n}(G)-\mathbb E[G(X_{j+n})]\right|\leq \left\|P_{j,n}G-\mu_{j+n}(G)\right\|_{L^\infty}.
$$
We thus conclude from \eqref{lll1} that 
$$
\left\|\mathbb E[G(X_{j+j}) \,|\, X_j]-\mathbb E[G(X_{j+n})]\right\|_{L^\infty}\leq 4(a-\gamma)^n.
$$
Using the Markov property and then \eqref{Phi rep}, we conclude that 
$$
\phi(n)\leq 2\left(1-\gamma\right)^n,
$$
and the proof of the theorem is complete.
\end{proof}
\begin{remark}
It is a natural question what the role of the measures $\mu_j$ constructed in the proof of Lemma \ref{LLLL1} is, beyond their part in the proof that the chain is $\phi$-mixing.    
In the non-stationary setting, the measures $\mu_j$ are the analogue of the invariant measure, and are sequentially invariant. In stationary settings, for each $j$, $\mu_j$ is equal to the unique invariant measure of the Markov chain. Importantly, $\sup_j\sup_x\|P_{j,n}(x,\cdot)-\mu_{j+n}\|_{\text{TV}}\to 0$. Consequently, the measure $\mu_j$ has the following properties. Take a Markov chain $(\zeta_k)_{k\leq j}$ with transition operators $P_k$. Then the law of $\zeta_j$ given $\zeta_{j-n}$ converges in total variation towards $\mu_j$. One also gets that if we have a Markov chain $(\zeta_k)_{k\geq j}$ with transition operators $P_k$, then, regardless of the initial distribution of $\zeta_j$, the law of $\zeta_{j+n}$ has the form $\mu_{j+n}+a_n$, for some $a_n$ such that $\lim_{n\to\infty}\|a_n\|_{\text{TV}}=0$.
\end{remark}
\

 \subsection{Example 2: Dobrushin Contracting Chains}\label{Intro21}
Define the Dobrushin coefficient $\delta(P_{j,n})$ by $$\delta(P_{j,n}):=\sup_{x,y}\sup_{\Gamma}\left|\delta_x P_{j,n}(\Gamma) - \delta_y P_{j,n}(\Gamma)\right|,$$
where $\delta_x$ denotes the Dirac measure. Suppose that there exists $m\in \mathbb Z^+$  such that
\begin{align}
r:=\sup_j \delta(P_{j,m})< 1.
\end{align}

\begin{lemma}\label{DubLem}
For Markov chains satisfying the above uniform contraction, we have 
$$
\phi(n)= O(r^{n/m}).
$$
\end{lemma}
\begin{proof}
 Since $\delta(PQ)\leq \delta(P)\delta(Q)$, this means that $\delta(P_{j,n+k})\leq \delta(P_{j,n})\delta(P_{j+n,k})$ for all $j,n,k$.  This implies that for each $k\in \mathbb Z^+$, \begin{align}
    \delta(P_{j,km}) \leq r^k.
\end{align} Furthermore, for any $n\in \mathbb Z^+$, we can decompose $n = km+\ell$ for some $k\in \mathbb Z^+$ and $0\leq \ell \leq m-1$. Thus, this ensures that there exist $C>0$ and $\rho < 1$ such that for all $n$, $$\delta(P_{j,n})\leq C\rho^n.$$
Moreover, for each $x$ and $y$, for each $j,n,$ and $f\in L^{\infty}$, this gives, 
\begin{align}
    \left|P_{j,n} f(x) - P_{j,n} f(y)\right|&=\left|\int f d(P_{j,n}(x, \cdot) - P_{j,n}(y,\cdot))\right|\nonumber\\
    &\leq C\rho^n \left\|f \right\|_{L^{\infty}}.\nonumber
\end{align}
Consequently, with $\kappa_j$ denoting the law of $X_j$, we have,
\begin{align}
    |P_{j,n} f(x) - \kappa_{j+n}(f)|  &= |P_{j,n} f(x) - \int P_{j,n}f(y) \kappa_j(dy) |\nonumber\\
    &\leq \int \left |P_{j,n} f(x) - P_{j,n} f(y)\right |\kappa_j(dy)\nonumber\\
    &\leq C\rho^n \left\|f\right\|_{L^{\infty}}\nonumber.
\end{align}
Taking supremums, we obtain \begin{align}
    \left\|P_{j,n} f - \kappa_{j+n}f\right\|_{L^{\infty}} \leq  C\rho^n \left\|f\right\|_{L^{\infty}}\nonumber. 
\end{align}
It follows that the Markov chain is exponentially $\phi$-mixing using \eqref{Phi rep}.   
\end{proof}

\section{Proof of Theorems \ref{VarThm}, \ref{BE}, and \ref{ThWass}}
First, using \eqref{Phi rep} there exists $A>0$ and $\delta\in(0,1)$ such that if $\nu_j$ denotes the law of $X_{j+n}$, then
\begin{equation}\label{Phi rep 2}
\left\|P_{j,n}(G)-\nu_{j+n}(G)\right\|_{L^\infty}\leq A\left\|G\right\|_{L^\infty(\nu_{j+n})}\delta^n.    
\end{equation}
\subsubsection{Proof of Theorem \ref{VarThm}}
 Theorem \ref{VarThm} follows from the following result.
  \begin{lemma}\label{Mart lemm}
Make the same assumptions as in Theorem \ref{VarThm}. There are measurable functions $M_j$ and $h_j$ on $\cX_j$ such that almost surely for all integers $j$, we have
\begin{equation}
\label{MartCob}
f_j(X_j)-\bbE\left[f_j(X_j)\right]=M_{j}(X_{j-1},X_{j})+h_{j-1}(X_{j-1})-h_j(X_j).
\end{equation}
Moreover, $\sup_j\|h_j\|_{L^\infty}<\infty$, $\sup_j\|M_j\|_{L^\infty}<\infty$, and $M_j(X_{j-1},X_{j})$  is a  martingale difference with respect to the filtration $\cF_{-\infty,j}$.
\end{lemma}

\begin{proof}
Denote 
$\tilde f_j(X_j)=f_j(X_j)-\bbE[f_j(X_j)]$.
Set 
\begin{equation}
\label{DefUj}
h_j=\sum_{k=j+1}^{\infty}\bbE\,[\tilde f_{k}(X_k)\,|\,X_j]=\sum_{k=j+1}^{\infty}\bbE\,[\tilde f_{k}(X_k)\,|\,X_0,X_1,\cdots,X_{j}].
\end{equation}
Then by \eqref{Phi rep 2},
$$
\left\|h_j\right\|_{L^\infty}\leq A\sum_{k=1}^\infty \delta^{k}\left\|f_{j+k}\right\|_{L^\infty}\leq C'<\infty
$$
for some constants $C,C'>0$.
 Set $M_{j}=\tilde f_j(X_j)+h_j(X_j)-h_{j-1}(X_{j-1})$. Then, it follows from the definition of $h_j$ that $M_j$ is a martingale difference with respect to the filtration $\cF_{0,j}$.
\end{proof}

\subsection{Proof of Theorems \ref{BE} and \ref{ThWass}}

By applying  \cite[Theorem 5]{H} and \cite[Theorem 9]{H} and \cite[Corollary 11]{H}, 
Theorems \ref{BE} and \ref{ThWass} will follow from the following result.
\begin{proposition}\label{Growth Prop}
In the circumstances of  Theorems \ref{BE} and \ref{ThWass}, we can develop a branch $\Lambda_{n}(t)$ of $\ln\bbE[e^{itS_n}]$ around the origin such that for every $k\geq3$, there are constants
 $\del_k>0$ and $C_k>0$ such that for all $s\leq k$, we have
\begin{equation}\label{Add}
\sup_{t\in[-\del_k,\del_k]}|\Lambda^{(s)}_{n}(t)|\leq C_k\sig_n^2.    
\end{equation}
\end{proposition}
\begin{proof}
The proof of Proposition \ref{Growth Prop} proceeds similarly to the proof of the main result in \cite{dolgopyat2023berry}, but for the reader's convenience, we provide some of the details. 
First, since the chain is $\phi$-mixing by \cite[Theorem 6.17]{PelBook}, and because the functions $f_j$ are uniformly bounded for every $k$, there exists a constant $c_k>0$ such that for all $j$ and $n$, we have 
\begin{equation}\label{Pell}
\left\|\sum_{\ell=j}^{j+n-1}(f_j(X_j)-\bbE[f_j(X_j)])\right\|_{L^k}\leq c_k\left(1+\left\|\sum_{\ell=j}^{j+n-1}(f_j(X_j)-\bbE[f_j(X_j)])\right\|_{L^2}\right).    
\end{equation}

Next, let us decompose $\{0,1,\cdots,n-1\}$ into blocks  $B_j, j\leq k_n$, such that $R\leq \text{Var}(\sum_{\ell\in B_j}f_\ell(X_\ell))\leq 2R$ for some $R$ large enough.  Then, using that the chain is exponentially fast $\rho$-mixing  (recalling that $\rho(n)\leq 2\sqrt{\phi(n)}$), we see that 
if $R$ is large enough, then with $S_{B_j}=\sum_{\ell\in B_j}f_\ell(X_\ell)$,
$$
C_1\sum_{j=1}^{k_n}\text{Var}\left(S_{B_j}\right)-C_2=\text{Var}\left(\sum_{j\leq k_n}S_{B_j}\right)\leq C_3\sum_{j=1}^{k_n}\text{Var}\left(S_{B_j}\right)+C_4
$$
for some constants $C_i>0$. We thus conclude that $k_n\asymp \sigma_n^2$.

Next, define operators $P_{j,z}:L^\infty(\mathcal X_{j+1})\to L^\infty(\mathcal X_j), z\in\mathbb C$,  by
$$
P_{j,z}g(x)=\bbE\,[e^{zf_{j+1}(X_{j+1})}g(X_{j+1})\,|\,X_j=x].
$$
Then, these operators 
 are holomorphic in $z\in\bbC$ with respect to the $L^\infty$ norms. Now, since \eqref{Phi rep 2} holds, we can apply 
 \cite[Theorem D.2]{dolgopyat2025berry}. This means that for $z\in\bbC$ with $|z|$ small enough,  there are analytic in $z$ uniformly bounded complex numbers $\la_j(z)$, functions $h_j^{(z)}$ such that $\sup_z\sup_j\|h_j^{(z)}\|_{L^\infty(\nu_j)}<\infty$, and complex linear functionals $\nu_j^{(z)}$ such that $\sup_z\sup_j\|\nu_j^{(z)}\|_{L^\infty(\nu_j)}<\infty$, 
 $\nu_j^{(z)}(h_j^{(z)})=\nu_j^{(z)}(\textbf{1})=1$, $\lambda_j(0)=1$, $h_j^{(0)}=\textbf{1}$, $\nu_j^{(0)}=\mu_j$, and 
 $$
\left\|P_{j,z}\circ\cdots\circ P_{j+n-1,z}-\left(\prod_{k=j}^{j+n-1}\lambda_k(z)\right)\nu_{j+n}^{(z)}\otimes h_j^{(z)}\right\|_{L^\infty}\leq C\delta^n,
 $$
 where $C>0$ and $\delta\in(0,1)$ are constants.

Then, the corresponding sequential pressure functions $\pi_j(z)$ are given by $\pi_j(z)=\ln\la_j(z)$. Denote $\Pi_{j}(z)=\sum_{\ell\in B_j}\pi_\ell(z)$. Then, arguing like in \cite{dolgopyat2025berry},
 and using inequality \eqref{Pell}, it follows that for every $k\geq 3$, there exist $C_k,\delta_k>0$ such that 
 $$
\sup_{j}\sup_{t\in[-\delta_k,\delta_k]} \left|\Pi_{j}^{(k)}(it)\right|\leq C_k.
 $$
Therefore, 
$$
\sup_{t\in[-\delta_k,\delta_k]}\left|\sum_{j=1}^{k_n}\Pi_{j}^{(k)}(it)\right|\leq C_k'\sigma_n^2.
$$
This estimate, together with the above complex sequential Perron--Frobenius theorem and the arguments in the proof of \cite[Proposition 30]{dolgopyat2023berry}, yields \eqref{Add}.
\end{proof}

\section{Main Results for Markov Chains in Random Dynamical Environments}\label{Intro2}
Let $\cX$ be a measurable space and let $\cX_\om$ be measurable in $\omega$ subsets of $\cX$. Let $P_\om(x,\Gamma)$ be transition probabilities  which are measurable in $\om$ (here $x\in\cX_
\om$ and $\Gamma\subseteq\cX_{\te\om}$ is a measurable set). Denote 
$$
P_{\om,n}=P_{\om}\circ P_{\te\om}\circ\cdots\circ P_{\te^{n-1}\om}.
$$
Let us consider any Markov chain $X_\omega=(X_{\om,n})_{n\in \mathbb Z^+ }$ such that $X_{\om, n}$ takes values on $\cX_{\te^n\omega}$ and the transition operator when jumping from time $n$ to time $n+1$ is $P_{\te^n\omega}$. Let us denote by $\kappa_\om$ the law of $(X_{\om,k})_{k\in\mathbb Z}$.
Denote $\cF_{\om,k,\ell}=\sigma\{X_{\om,j}: k\leq j\leq \ell, k\in\bbR\}$ and
$$
\phi_\omega(n;k)=\sup\left\{|\kappa_\om(B\,|\,A)-\kappa_\om(B)|: A\in\cF_{\om,0,k}, B\in \cF_{\om,k+n,\infty}\right\}.
$$
We assume here that there is a random variable $K\in L^p$ for some $p>1$ such that the $\phi$-mixing coefficients of the Markov chain $X_\omega$ satisfy
\begin{equation}\label{Ass1}
\phi_\omega(n;k)\leq K(\te^k\omega)n^{-p}.    
\end{equation}
\begin{remark}
It is enough to assume that $\phi_\omega(n;0)\leq K(\omega)n^{-p}$, since $\phi_{\om}(n;k)=\phi_{\te^k\om}(n;0)$. 
\end{remark}

Let us take a function $f:\Omega\times X$ such that $\|f_\om\|_{\infty}\in L^p$. Let 
$$
S_n^\om f=\sum_{j=0}^{n-1}f_{\te^j\omega}(X_{\om,j}).
$$
\begin{theorem}\label{Variance asymptotics theorem}
If $p>2$, then
there exists $\Sigma\geq0$ such that for $\bbP$-a.a. $\om$, we have 
$$
\lim_{n\to\infty}\frac1n\text{Var}\,(S_n^\om f)=\Sigma^2.
$$
Moreover, $\Sigma=0$ if and only if $f_{\te^k\om}(X_{\om,k})-\bbE[f_{\te^k\om}(X_{\om,k})]=h_{\te^k\om}(X_{\om,k})-h_{\te^{k+1}\om}(X_{\om,k+1})$
for some measurable uniformly bounded functions $h_\om:\cX_\om\to\bbR$.
\end{theorem}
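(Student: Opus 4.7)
The plan is to combine the effective mixing from Lemma \ref{LL} with standard tools for stationary ergodic sequences, carried out via the skew product built from the equivariant family $\{\mu_\om\}$. First I would reduce to the case $X_{\om,0}\sim\mu_\om$, then establish the variance limit by Birkhoff's ergodic theorem applied to a correlation function, and finally characterize $\Sig=0$ through a Gordin-type coboundary argument.

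\emph{Equivariant reduction and the Birkhoff limit.} Assume $X_{\om,0}\sim\mu_\om$, so that $X_{\om,k}\sim\mu_{\te^k\om}$. Setting $\tilde f_\om=f_\om-\mu_\om(f_\om)$ and
$$c_m(\om):=\int \tilde f_\om(x)\,(P_{\om,m}\tilde f_{\te^m\om})(x)\,d\mu_\om(x),$$
the Markov property gives
$$\text{Var}_{\mu_\om}(S_n^\om f)=\sum_{j=0}^{n-1}c_0(\te^j\om)+2\sum_{m=1}^{n-1}\sum_{j=0}^{n-1-m}c_m(\te^j\om).$$
Lemma \ref{LL}(i) yields $|c_m(\om)|\le 8\,F(\om)F(\te^m\om)K(\te^m\om)\,m^{-\be}$. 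For $a>5$ and $p>2$ one can pick $\be>1$ and $d$ with $\be d<a-3$ and $2/p+1/d\le 1$, using the interpolation $\min(1,Km^{-\be})\le K^\al m^{-\al\be}$ if needed to relax the moment balance, so Hölder's inequality gives $\sum_m\bbE|c_m|<\infty$. Since $(Y_j)$ is mixing, $\te$ is ergodic; Birkhoff's theorem yields $n^{-1}\sum_j c_m(\te^j\om)\to\bbE[c_m]$ a.s.\ for each fixed $m$, and a truncation argument driven by the uniform summability of $\bbE|c_m|$ passes to the double-sum limit,
$$\tfrac1n\,\text{Var}_{\mu_\om}(S_n^\om f)\longrightarrow\Sig^2:=\bbE[c_0]+2\sum_{m\ge 1}\bbE[c_m].$$
For a general initial law $\ka_\om$, a direct computation shows that the difference $\text{Cov}_{\ka_\om}(f_{\te^j\om}(X_j),f_{\te^k\om}(X_k))-c_{k-j}(\te^j\om)$ is dominated by the product of the mixing errors at times $j$ and $k-j$, whose sum over $0\le j<k\le n-1$ is $O(1)$ almost surely; hence the same limit $\Sig^2$ is obtained.

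\emph{Coboundary characterization.} One direction is immediate: a pure coboundary decomposition telescopes, keeping $S_n^\om f-\bbE[S_n^\om f]$ uniformly bounded and forcing $\Sig=0$. For the converse, the summability $\sum_m\bbE|c_m|<\infty$ lets us define the Gordin corrector
$$h_\om(x):=\sum_{m=0}^\infty (P_{\om,m}\tilde f_{\te^m\om})(x),$$
uniformly bounded $\bbP$-a.s., satisfying the twisted identity $\tilde f_\om=h_\om-P_\om h_{\te\om}$ as functions on $\cX_\om$. Evaluating at $X_{\om,k}$ and $X_{\om,k+1}$ then gives
$$\tilde f_{\te^k\om}(X_{\om,k})=h_{\te^k\om}(X_{\om,k})-h_{\te^{k+1}\om}(X_{\om,k+1})+M_{k+1},$$
where $M_{k+1}:=h_{\te^{k+1}\om}(X_{\om,k+1})-P_{\te^k\om}h_{\te^{k+1}\om}(X_{\om,k})$ is a martingale difference for the natural filtration. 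By skew-product stationarity the $(M_k)$ form an $L^2$-stationary ergodic martingale-difference sequence, and orthogonality of increments together with the $O(1)$ telescoping boundary terms yields $\Sig^2=\bbE[M_1^2]$; thus $\Sig=0$ forces $M_k\equiv 0$ a.s., which is precisely the pure coboundary identity in the statement.

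I expect the main obstacle to be this last step: showing that the martingale component of the Gordin decomposition vanishes identically (rather than merely having summable variances, as in Theorem \ref{VarThm}) when $\Sig=0$. This crucially uses the skew-product stationarity and is what distinguishes the present random dynamical setting from a generic inhomogeneous Markov chain; a secondary technical point is to verify that the parameters $(\be,d,\al)$ can be chosen compatibly given only $a>5$ and $p>2$, which is where the interpolation of the Lemma \ref{LL} bound plays its role.
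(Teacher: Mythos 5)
Your route is genuinely different from the paper's: the paper proves the effective fiberwise mixing rates (Lemma \ref{L}, Corollary \ref{Cor11}) and then simply invokes the abstract machinery of \cite{YH1} for all the limit theorems of Section \ref{Intro2}, whereas you attempt a self-contained argument via correlation sums, Birkhoff's theorem and a Gordin corrector. The strategy is reasonable in outline, but the key quantitative step does not work in the stated generality. You need $\sum_m\bbE|c_m|<\infty$, starting from $|c_m(\om)|\le C\,F(\om)F(\te^m\om)\min\bigl(1,K(\te^m\om)m^{-\be}\bigr)$ with $F\in L^p$ ($p>2$) and $K\in L^d$ subject to $\be d<a-3$ from Lemma \ref{LL}(i). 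Applying H\"older together with your interpolation $\min(1,x)\le x^{\al}$ forces $\al\le\min\bigl(1,d(1-2/p)\bigr)$ and $\al\be>1$, hence $\be d>p/(p-2)$; combined with $\be d<a-3$ this requires $(a-3)(p-2)>p$, i.e. $p>2(a-3)/(a-4)$. For $a$ close to $5$ this means $p>4$ (roughly), so the claim that ``for $a>5$ and $p>2$ one can pick $\be,d,\al$'' is false whenever $2<p\le 2(a-3)/(a-4)$: the interpolation only redistributes the product $\be d$, which is capped by $a-3$, so it does not relax the binding constraint. Without this summability, the absolute convergence defining $\Sigma^2$, the truncation step passing to the double sum, and the convergence of the corrector series $h_\om=\sum_m P_{\om,m}\tilde f_{\te^m\om}$ are all unproved.

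Two secondary points in the same spirit. First, even where the corrector series converges, your construction yields at best $\|h_\om\|_{L^\infty}<\infty$ for $\bbP$-a.e.\ $\om$ (a random bound depending on $F(\te^m\om)$ and $K(\te^m\om)$), not the \emph{uniformly} bounded $h_\om$ asserted in the statement, and the identification $\Sigma^2=\bbE[M_1^2]$ via ``orthogonality plus $O(1)$ boundary terms'' needs $h$ to be square integrable over the skew product, which again requires a moment balance you have not verified. Second, the claim that the change from the equivariant initial law $\mu_\om$ to an arbitrary $\ka_\om$ contributes an almost surely $O(1)$ (or even $o(n)$) error rests on the same kind of summability. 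The paper avoids all of this bookkeeping by feeding the $L^d$ mixing bound of Corollary \ref{Cor11} into the results of \cite{YH1}; if you want a self-contained proof you must either impose the additional relation $(a-3)(p-2)>p$ (or let $a,p$ be large enough), or replace the single H\"older application by the finer blocking/truncation arguments that the cited reference carries out.
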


\begin{theorem}\label{CLT}
Let$\bar{S}_n^\om=S_n^\omega-\mathbb E[S_n^\omega]$. If $p>4$  and $\Sigma>0$,  then for $\bbP$-a.a. $\om$  and all  $t\in\bbR$,
 $$
\lim_{n\to \infty}\ka_\om \left\{n^{-1/2}\bar S_{n}^\om f\leq t\Sigma\right\}=\frac{1}{\sqrt{2\pi}}\int_{-\infty}^t e^{-\frac12x^2}dx:=\Phi(t).
 $$
\end{theorem}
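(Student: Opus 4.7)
The proof will reduce the quenched CLT to the abstract framework of \cite{YH1}, with Lemma \ref{LL} providing the crucial ingredient: effective random geometric ergodicity. First, shifting the estimate of Lemma \ref{LL} via $\om\mapsto\te^{k+n}\om$ yields, for $\bbP$-a.a.\ $\om$ and every $k\geq 0$,
$$
\sup_{x}\bigl\|P_{\te^k\om,n}(x,\cdot)-\mu_{\te^{k+n}\om}\bigr\|_{TV}\leq K(\te^{k+n}\om)\, r_n,
$$
where $r_n=n^{-\beta}$ with $K\in L^d$ in the polynomial case, or $r_n=e^{-n^\kappa}$ with $K\in\bigcap_q L^q$ in the stretched-exponential case. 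Under $a>5$ one can choose $\beta$ and $d$ (both suitably large) so that, via H\"older's inequality and Borel--Cantelli applied to the stationary sequences $F\circ\te^n$ and $K\circ\te^n$, the quantity $F(\te^n\om)K(\te^n\om)r_n$ is $\bbP$-a.s.\ summable in $n$ and has finite $L^{p'}$-moment for some $p'>2$.

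With this input I would carry out a Gordin-type martingale--coboundary decomposition. Writing $\bar f_\om=f_\om-\mu_\om(f_\om)$, define
$$
h_\om(x):=\sum_{n=0}^\infty (P_{\om,n}\bar f_{\te^n\om})(x).
$$
Since $|(P_{\om,n}\bar f_{\te^n\om})(x)|\leq 2 F(\te^n\om)K(\te^n\om)r_n$, the above summability shows that $\sup_x|h_\om(x)|\leq H(\om)$ for some random variable $H\in L^{p'}$. A direct calculation yields the Poisson-type identity $\bar f_\om=h_\om-P_\om h_{\te\om}$, whence
$$
\bar f_{\te^k\om}(X_{\om,k})=h_{\te^k\om}(X_{\om,k})-h_{\te^{k+1}\om}(X_{\om,k+1})+M^\om_{k+1},
$$
with $M^\om_{k+1}:=h_{\te^{k+1}\om}(X_{\om,k+1})-(P_{\te^k\om}h_{\te^{k+1}\om})(X_{\om,k})$ a martingale difference with respect to $\cG_k:=\sig(X_{\om,0},\ldots,X_{\om,k})$. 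Telescoping yields $\bar S_n^\om f=\sum_{k=1}^n M^\om_k+h_\om(X_{\om,0})-h_{\te^n\om}(X_{\om,n})$, and since $H\in L^{p'}$ with $p'>2$ gives $H(\te^n\om)=o(\sqrt n)$ a.s.\ by Borel--Cantelli and stationarity, the boundary term is $o(\sqrt n)$.

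It then remains to apply Brown's martingale CLT to $n^{-1/2}\sum_{k=1}^n M^\om_k$. The $L^{p'}$ bound on the increments provides the (conditional) Lyapunov condition, while the a.s.\ convergence
$n^{-1}\sum_{k=1}^n \ka_\om[(M^\om_k)^2]\to \Sigma^2$ follows from Birkhoff's ergodic theorem applied to the skew product $(\om,x)\mapsto(\te\om,\cdot)$ equipped with the invariant measure $\bbP\ltimes\mu_\om$: up to an error that decays in $k$ (controlled once more by Lemma \ref{LL}, capturing the discrepancy between the law of $X_{\om,k-1}$ and $\mu_{\te^{k-1}\om}$), $\ka_\om[(M^\om_k)^2]$ is a stationary functional of $\te^{k-1}\om$ whose mean equals $\Sigma^2$. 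The main obstacle is the quenched nature of the statement: all estimates must hold for a fixed typical $\om$, forcing one to carefully balance the moment assumptions on $K$ and $F$ against the decay rate $r_n$ along a single orbit. Once this bookkeeping is complete, the argument fits squarely within the abstract machinery developed in \cite{YH1}, and no new ideas beyond those already present there are needed.
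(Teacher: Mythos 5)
Your overall strategy --- effective fiberwise mixing from Lemma \ref{LL} followed by a Gordin-type martingale--coboundary decomposition and a martingale CLT --- is in the same spirit as the paper, which after establishing Corollary \ref{Cor11} simply defers to the machinery of \cite{YH1}. However, your explicit bookkeeping has a genuine gap. Lemma \ref{LL}(i) gives $K\in L^d$ only under the constraint $a>3+\beta d$, so with $a>5$ the product $\beta d$ is capped by $a-3$, which may be barely above $2$: you cannot choose ``$\beta$ and $d$ both suitably large''. Your argument needs $\sup_x|h_\om(x)|\leq H(\om)$ with $H\in L^{p'}$ for some $p'>2$ (already $H\in L^2$ is what the boundary estimate $H(\te^n\om)=o(\sqrt n)$ a.s.\ and the Birkhoff/variance step require); bounding $H(\om)\leq 2\sum_n F(\te^n\om)K(\te^n\om)r_n$ and using H\"older forces $1/p+1/d\leq 1/2$, i.e.\ $d\geq 2p/(p-2)$, while summability of $r_n=n^{-\beta}$ forces $\beta>1$, hence $\beta d> 2p/(p-2)$. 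Since Assumption \ref{Ass p} only gives some $p>2$, possibly close to $2$, this is incompatible with $\beta d<a-3$ when $a$ is only slightly above $5$. So, under the stated hypotheses, the sup-norm-based Poisson-equation bound does not deliver the moments you claim; one must either exploit the alternative bound $\min\{2F(\te^n\om),\,2F(\te^n\om)K(\te^n\om)r_n\}$ by interpolation/truncation, work with weaker ($L^2$-type) fiber norms, or follow the quantitative scheme of \cite{YH1} --- which is exactly why the paper routes the proof through that reference rather than through a crude sup-norm coboundary.

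Two further points need attention. Brown's martingale CLT requires convergence in probability of the normalized sum of \emph{conditional} variances $n^{-1}\sum_{k\leq n}\bbE[(M^\om_k)^2\mid\cG_{k-1}]$, not merely of the unconditional second moments $\ka_\om[(M^\om_k)^2]$; you need an additional law-of-large-numbers argument for the functions $x\mapsto\bbE[(M^\om_k)^2\mid X_{\om,k-1}=x]$, again extracted from the effective mixing rates (and the identification of the limit with $\Sigma^2$ via the skew product requires integrability of the fiberwise variances, which runs into the same moment issue as above). Finally, there is a centering mismatch: the theorem centers by $\ka_\om(S^\om_n f)$ for an \emph{arbitrary} initial distribution, whereas your decomposition centers $f_\om$ by $\mu_\om(f_\om)$; you must verify that the two centerings differ by $o(\sqrt n)$ along a.e.\ orbit, which is the same kind of bookkeeping and should be stated explicitly.
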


\begin{theorem}\label{BE1}
If $p>4$  and $\Sigma>0$, then for $\bbP$-a.a. $\om$,
  $$
\sup_{t\in\mathbb R}\left|\ka_\om\{\bar S_n^\om f\leq \Sigma_{\om,n}t\}-\Phi(t)\right|=O\left(n^{-(\frac{1}2-\delta(p))} \right),
  $$
 where $\delta(p)\to 0$ as $p\to\infty$.
\end{theorem}

We can also get close to optimal moderate deviations principles.
\begin{theorem}\label{MDP}
If $p>4$ and $\Sigma>0$, then we have the following.
 Let $\{a_n\}_{n\in \mathbb Z^+}$ be a sequence such that 
$
\lim_{n\to\infty}a_n=\infty\,\,\text{ and }\,\,a_n=o(n^{\frac12-\delta(p)}). 
$
Denote $W_n^\om=\frac{\bar S_n^\om f}{n^{1/2}a_n}$. Then, for $\bbP$-a.a. $\om$,
for every Borel-measurable subset $\Gamma\subseteq\bbR$,
$$
-\inf_{x\in\Gamma^o}\frac 12x^2\leq \liminf_{n\to\infty}\frac{1}{a_n^2}\ln \ka_\om \left\{W_n^\om\in \Gamma\right\}\leq \limsup_{n\to\infty}\frac{1}{a_n^2}\ln \ka_\om \left\{W_n^\om\in \Gamma\right\}\leq -\inf_{x\in\overline\Gamma}\frac 12x^2,
$$
where $\overline{\Gamma}$ is the closure of $\Gamma$ and $\Gamma^o$ is its interior.
\end{theorem}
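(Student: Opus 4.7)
The plan is to reduce Theorem \ref{MDP}, for $\bbP$-almost every $\om$, to an abstract moderate deviations principle for sequential (inhomogeneous) Markov chains of the type developed in \cite{YH1, H, HafSPA}. As announced in the introduction, the main contribution of the present paper is precisely that Lemma \ref{LL} supplies the effective random ``geometric'' ergodicity that those abstract results require as input, so that only minor notational modifications are needed afterwards.

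First, I would fix a typical $\om$ and regard $(X_{\om,k})_{k\geq 0}$ under $\ka_\om$ as a sequential Markov chain with kernels $P_{\te^k\om}$ and observables $f_{\te^k\om}$ bounded by $F(\te^k\om)$. Lemma \ref{LL} provides the rate
$$
\sup_{x,\Gam}\bigl|P_{\te^{-n}\om,n}(x,\Gam)-\mu_\om(\Gam)\bigr|\leq K(\om)\,r(n),
$$
with $r(n)=n^{-\beta}$ and $K\in L^d$ in case (i), and $r(n)=e^{-n^\kappa}$ with $K\in\cap_q L^q$ in case (ii). Shifting this estimate along the orbit and applying Birkhoff and Borel--Cantelli to $F^p$ and to $K$ yields, for $\bbP$-a.e.\ $\om$, the two pathwise inputs the sequential framework needs: a non-uniform sequential Doeblin step of size $n_k(\om)$ growing sub-polynomially (resp.\ sub-logarithmically) in $k$, and the observable bound $F(\te^k\om)=o(k^{1/p+\ve})$ for every $\ve>0$. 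The variance asymptotics stated just before Theorem \ref{CLT} additionally gives $\Sig_{\om,n}^2\sim n\Sig^2$, which matches the normalization appearing in $W_n^\om$.

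Second, with these pathwise inputs in hand I would invoke the abstract sequential MDP from \cite{YH1}. At the heart of that argument is a block decomposition of $\bar S_n^\om f$ along Doeblin renewal times, a truncation of the observables at a level $L_n=n^{\alpha}$ to bring the problem into the bounded regime, and an iteration of the contraction of the transfer operators on the truncated blocks to produce a Cram\'er-type bound on their cumulants. Combining this bound with a G\"artner--Ellis upper estimate and the usual tilted-measure lower bound yields the MDP with the stated quadratic rate function, after accounting for the normalization by $\Sig$.

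The main obstacle will be the quantitative truncation step. Because $F$ is only in $L^p$ and, in case (i), $K$ is only in $L^d$, the truncation level $L_n$, the Doeblin block length, and the moderate speed $a_n^2$ must be balanced so that the truncation error is $o(a_n^2)$ on the logarithmic scale while the Cram\'er bound on the cumulants of the truncated chain remains uniform in $\om$ on a full-measure set. This is exactly the same balance that governs the Berry--Esseen rate in Theorem \ref{BE1}; tracking the optimum determines the admissible range $a_n=o(n^{1/2-\delta(p,a)})$ and yields the explicit formula for $\delta(p,a)$ alluded to in the statement.
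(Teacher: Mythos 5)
Your proposal takes essentially the same route as the paper: the paper's entire proof of Theorem \ref{MDP} is the remark at the end of Section 4 that, once the effective convergence rates of Lemma \ref{LL} and Corollary \ref{Cor11} are established, all the limit theorems of Section \ref{Intro2} (including the MDP) follow by repeating the arguments of \cite{YH1} with only minor notational modifications. Your extra sketch of the internals of that machinery (blocking, truncation, G\"artner--Ellis and the tilted lower bound) goes beyond what the paper itself verifies, but the essential reduction --- effective random ``geometric'' ergodicity from Lemma \ref{LL} fed into the abstract results of \cite{YH1} --- is exactly the paper's argument.
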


Using \eqref{Phi rep}, we see that if $\mu_\omega$ is the law of $X_{\omega,0}$, then
$$
\left\|P_{\omega,n}g-\mu_{\te^n\omega}(g)\right\|_{L^\infty}\leq 2K(\te^n\omega)\left\|g\right\|_{L^\infty}n^{-p}.
$$
Therefore, the theorems are proven exactly like the main results in \cite{YH1}, and we will omit the proofs. The reason we formulated them is that in the next two sections, we will provide examples where Assumption \eqref{Ass}  holds, and so it is convenient to have a general theorem to refer to.

\subsection{Example 1: Markov Chains in Random Environments Satisfying a Random Doeblin Condition}
Let $\{Y_j\}_{j\in\bbZ}$ be a stationary mixing sequence of random variables taking values on some measurable space $\cY$. Let $(\Omega,\mathcal F,\mathbb P,\te)$ be the shift system generated by this sequence, namely $\Omega=\cY^\bbZ$, $\te:\Omega\to\Omega$ is the left shift, and $\bbP$ is the law of the path $\{Y_j\}_{j\in\bbZ}$.

We assume that there are random variables $n_\om$ and $\gamma_\om\in(0,1)$ and a probability measure $m_\om$ on $\cX_\om$ which is measurable in $\omega$
such that $\bbP$-a.s. for all $x\in\cX_{\te^{-n_\om}\om}$ and every measurable subset $\Gamma\subseteq\cX_\om$, we have
\begin{equation}\label{DoebRand}
P_{\te^{-n_{\om}}\omega,n_\om}(x,\Gamma)\geq \gamma_\om m_{\omega}(\Gamma).    
\end{equation}
Then for every $n\geq n_\omega$, 
$$
P_{\te^{-n}\om,n}(x,\Gamma)\geq \gamma_\om m_{\omega}(\Gamma).
$$

Next, denote by $\cF_{a,b}$ the $\sig$-algebra generated by $Y_s$ for all finite $a\leq s\leq b$.
Recall that the $\alpha$-mixing coefficients of $\{Y_j\}$ are given by 
$$
\alpha(n)=\sup_{k}\left\{|\bbP(A\cap B)-\bbP(A)\bbP(B)|: A\in\cF_{-\infty,k}, \,B\in\cF_{k+n,\infty}\right\},
$$
and the $\psi_U$-mixing (upper $\psi$-mixing) coefficients of $\{Y_j\}$ are given by 
$$
\psi_U(n)=\sup_{k}\left\{\frac{\bbP(A\cap B)}{\bbP(A)\bbP(B)}-1:\, A\in\cF_{-\infty,k}, \,B\in\cF_{k+n,\infty},\, \bbP(A)\bbP(B)>0\right\}.
$$
\begin{assumption}\label{Ass}
For every $\delta>0$ and  $M\in\bbN$, set
$$
A=A_{\delta,M}= \left\{\om: \gamma_\om\geq \delta,\, n_\om\leq M\right\}.
$$
We assume that there exist $\del,M$ such that $\bbP(A)>0$ and the following condition holds.
For every $r$, there exist sets $A_r\in\cF_{-r,r}$ and $B_r\in\cF$ such that 
$$
\beta_r=\bbP(B_r)=o(r^{-1}), \,\,A_r\subseteq A\cup B_r,\,\, p_0=\lim_{r\to\infty}\bbP(A_r)>0.
$$
 Moreover, suppose that either 
$\alpha(r)=O(r\beta_r)$ or 
\begin{equation}\label{Psii}
 (1+\limsup_{k\to\infty}\psi_U(k))\,(1-p_0(1-\delta))<1.   
\end{equation}
\end{assumption}
Notice that when $P_\omega$ and $\gamma_\om$ depend only on $\om_0$, then $A\subseteq\cF_{-M,M}$, and so we can take $B_r=\emptyset$ and $A_r=A$ (and $\delta$ small enough and $M$ large enough). Observe further that when $n_\omega$ is minimal and  $P_\omega$ and $\gamma_\om$ depend only on $\om_{-L}, \cdots, \om_L$ for a fixed $L \in \mathbb{Z}^+$, then $A\in \mathcal F_{-r,r}$ for all $r \geq L$. Thus, we may still take $A_r = A$ and $B_r = \emptyset$ (where $\delta$ is small enough and $M$ large enough so that $\mathbb P(A)>0$.)

\begin{proposition}\label{LL}
Under Assumption \ref{Ass}, for $\mathbb P$-a.a. $\om$, there exists a probability measure $\mu_\om$ on $\mathcal X_\omega$, and make one of the two assumptions:
\vskip0.1cm
(i) Suppose   $b_r=O(r^{-a})$ for some $a>3$. Let $\beta$ and $d$ be such that $a>3+\beta d$. Then, there exists $K\in L^d$ such that
$$
\sup_{x\in\cX_{\te^{-n}\om}}\sup_{\Gamma\subseteq\cX_\om}\left|P_{\te^{-n}\om,n}(x,\Gamma)-\mu_\om(\Gamma)\right|\leq K(\om)n^{-\beta}.
$$
Therefore, if we take the chains $X_{\om,k}$ such that $X_{\om,k}$ is distributed according to $\mu_{\te^k\omega}$, then for every $\epsilon>0$,
$$
\phi_\omega(n;k)\leq  A_\epsilon(\te^k\om)n^{-\beta+\epsilon},
$$
where $A_\epsilon\in L^d$.
\vskip0.1cm
(ii) Suppose $b_r=O(e^{-ar^\eta})$ for some $a,\eta>0$. Then, for every $0<\zeta<\frac{\eta}{1+\eta}$, there exists a random variable $K\in\cap_{q\geq1} L^q$ such that 
$$
\sup_{x\in\cX_{\te^{-n}\om}}\sup_{\Gamma\subseteq\cX_\om}\left|P_{\te^{-n}\om,n}(x,\Gamma)-\mu_\om(\Gamma)\right|\leq K(\om)e^{-n^\zeta}.
$$
Therefore, if we take the chains $X_{\om,k}$ such that $X_{\om,k}$ is distributed according to $\mu_{\te^k\omega}$, then
$$
\phi_\omega(n;k)\leq  A(\te^k\om)e^{-n^\zeta},
$$
where $A\in L^q$ for all $q$.
\end{proposition}
\begin{remark}
Clearly $(P_\omega)^*\mu_\om=\mu_{\te\om}$, for $\bbP$-a.a. $\om$. Note also that when $P_\omega$ and $\gamma_\om$ depend only on $\om_0$, we can take $\eta$ arbitrarily large, since $\bbP(B_r)=0$ for $r$ large enough. Thus, we can take $\zeta$ arbitrarily close to $1$ (assuming \eqref{Psii} or $\alpha(r)=O(c^r)$ for some $c\in(0,1)$).
\end{remark}

The proof of Proposition \ref{LL} is based on the following result, which we formulate in the sequential setting. That is, we assume that we have a sequence of measurable spaces $\cX_j$ and transition probabilities $P_j(x,dy)$ from $\cX_j$ to $\cX_{j+1}$. As before, denote 
$$
P_{j,n}=P_j\circ P_{j+1}\circ\cdots\circ P_{j+n-1}.
$$
Let us suppose that for every $j$, there exists $\ell_{j}$ such that for every $x\in\cX_{j-\ell_j}$,
\begin{equation}\label{Doeb21}
P_{j-\ell_j,\ell_j}(x,\Gamma)\geq \gamma_{j-\ell_j}m_{j}(\Gamma).
\end{equation}
Then, for every $n\geq\ell_j$ and $x\in\cX_{j-n}$ we have 
\begin{equation}\label{Doeb21.1}
P_{j-n,n}(x,\Gamma)\geq \gamma_{j-\ell_j}m_{j}(\Gamma).
\end{equation}
In the setup of Section \ref{Intro1}, we can take $\ell_j=n_0$ and $\gamma_j=\gamma\in(0,1)$.
 Define $N_{j,1}=\ell_{j-1}$,  $N_{j,2}=\ell_{j-1-\ell_{j-1}}$, and then by recursion $N_{j,n}=\ell_{j-1-N_{j,n-1}}$. Using these notions and Lemma \ref{LLLL1}, the following result follows.
\begin{corollary}\label{Cor}
There are increasing positive measures $A_{j,n}$  such that 
for all $j$ and $m$ and $n>N_{j,m-1}$, we have
\begin{equation}
\sup_{\Gamma}\sup_x\left|P_{j-n,n}(x,\Gamma)-A_{j,n}(\Gamma)\right|\leq \prod_{k=1}^{m-1}(1-\gamma_{N_{j,k}}).
\end{equation}
   Therefore, if $\prod_{k=1}^{m-1}(1-\gamma_{N_{j,k}})\to 0$ as $m\to\infty$, then there exists a probability measure $\mu_j$ on $\cX_j$ such that 
    $$
\sup_{\Gamma}\sup_x\left|P_{j-n,n}(x,\Gamma)-\mu_j(\Gamma)\right|\leq \prod_{k=1}^{m-1}\left(1-\gamma_{N_{j,k}}\right).
   $$
   Consequently, by \eqref{Phi rep}, if we take a Markov chain $(X_j)$ with transition probabilities $P_j$ then with 
   $\cF_{k,\ell}=\sigma\{X_s: k\leq s\leq \ell, s\in\bbR\}$, we have
   $$
   \phi(\cF_{-\infty,j-n},\cF_{j,\infty})\leq 2\prod_{k=1}^{m-1}(1-\gamma_{N_{j,k}}).
   $$
\end{corollary}

\begin{corollary}\label{Cor2}
In the above notations,
$$
\left\|P_{j-n,n}g-\mu_{j}(g)\right\|_{L^\infty}\leq \left\|g(X_j)\right\|_{L^\infty} \prod_{k=1}^{m-1}(1-\gamma_{N_{j,k}}).
$$
In particular, in the setting of Section \ref{Intro1}, we have 
$$
\left\|P_{j-n,n}g-\mu_{j}(g)\right\|_{L^\infty}\leq \left(1-\gamma \right)^{-n_0}\left\|g(X_j)\right\|_{L^\infty} \left(1-\gamma\right)^{n/n_0}.
$$
\begin{proof}
By \cite[Ch.4]{Brad}, for every two sub-$\sig$-algebras $\cH,\cG$ of a given $\sig$-algebra,
 $$
\phi(\cH,\cG)=\sup\left\{|\bbP(B\,|\,A)-\bbP(B)|: A\in\cG, B\in\cH, \bbP(A)>0\right\}
$$
$$
=\frac12\sup\left\{ \left\|\bbE\left[g\,|\,\cG\right]-\bbE\left[g\right]\right\|_{L^\infty}:g\in L^\infty(\cH): \left\|g\right\|_{L^\infty}\leq1 \right\}. 
 $$
\end{proof}
\end{corollary}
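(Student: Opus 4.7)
The plan is to reduce the statement to the setwise bound already established in Corollary~\ref{Cor} by passing from probability-set differences to integrals of bounded test functions via total-variation duality. Fix $j$, $m$, and $n > N_{j,m-1}$, and set $\varepsilon := \prod_{k=1}^{m-1}(1-\gamma_{N_{j,k}})$. Corollary~\ref{Cor} yields $\sup_x \sup_\Gamma |P_{j-n,n}(x,\Gamma) - \mu_j(\Gamma)| \leq \varepsilon$, so for every $x \in \cX_{j-n}$ the signed measure $\rho_x := P_{j-n,n}(x,\cdot) - \mu_j$ has zero total mass and satisfies $\sup_\Gamma|\rho_x(\Gamma)| \leq \varepsilon$.

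For a bounded measurable $g : \cX_j \to \bbR$ I would write
$$
(P_{j-n,n}g)(x) - \mu_j(g) = \int g\, d\rho_x = \int (g - c)\, d\rho_x,
$$
which is valid for any constant $c$ because $\rho_x(\cX_j) = 0$. Taking $c$ to be the midpoint of the essential supremum and infimum of $g$ reduces $\|g-c\|_{L^\infty}$ to at most half the oscillation of $g$, while the Hahn--Jordan decomposition of $\rho_x$ gives $|\rho_x|(\cX_j) = 2\sup_\Gamma|\rho_x(\Gamma)| \leq 2\varepsilon$. The two factors of two cancel, and the remaining estimate is bounded by $\|g\|_{L^\infty}\varepsilon$, which is exactly what the Bradley characterization recorded in the proof sketch delivers: the $\tfrac12$ appearing there is precisely the reconciliation of the two natural normalizations of $\phi$-mixing.

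For the ``in particular'' statement, the uniform Doeblin condition of Section~\ref{Intro1} gives $\gamma_{N_{j,k}} = \gamma$ at every $k$ and the relevant cumulative indices grow by $n_0$ per step, so the bound specializes to $(1-\gamma)^{m-1}$ whenever $(m-1)n_0 \leq n$. Choosing $m-1 = \lfloor n/n_0\rfloor$ and absorbing the rounding loss into the (slightly wasteful) prefactor $(1-\gamma)^{-n_0}$ yields the claimed estimate. There is essentially no hard step; the content is entirely in Corollary~\ref{Cor} plus the duality argument, and the only point deserving attention is bookkeeping the multiplicative constants through Bradley's identity so that the advertised inequality holds after the constants have been absorbed into the prefactor $(1-\gamma)^{-n_0}$.
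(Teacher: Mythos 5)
Your route is essentially the paper's argument made self-contained: the paper converts the setwise bound of Corollary \ref{Cor} into a bound on $\bbE[g(X_j)\mid X_{j-n}=x]-\mu_j(g)$ by quoting Bradley's identity for the $\phi$-mixing coefficient, whereas you reprove that identity directly via the Hahn--Jordan decomposition of the zero-mass signed measure $\rho_x$ and the centering trick $\int g\,d\rho_x=\int(g-c)\,d\rho_x$. The substance is the same, and the direct total-variation argument is arguably cleaner, since it avoids having to identify the quantity in Corollary \ref{Cor} (where the centering is by $\mu_j$, not by the actual law of $X_j$) with a genuine $\phi$-mixing coefficient between $\sig(X_{j-n})$ and $\sig(X_j)$.

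One concrete slip in the bookkeeping: the two factors of two do not cancel as you claim. With $c$ the midpoint you get $\|g-c\|_{L^\infty}\le\tfrac12\,\mathrm{osc}(g)$ and $|\rho_x|(\cX_j)\le 2\ve$, so the product is $\ve\,\mathrm{osc}(g)\le 2\ve\|g\|_{L^\infty}$, not $\ve\|g\|_{L^\infty}$; the constant $2$ is sharp in this generality (take $\rho_x=\ve(\delta_a-\delta_b)$ and $g(a)=-g(b)=1$). Note that Bradley's identity as quoted in the paper, with its factor $\tfrac12$, also delivers $2\ve\|g\|_{L^\infty}$, so the constant $1$ in the stated corollary carries the same harmless factor-two discrepancy; strictly speaking one should either keep the $2$ or write the bound with $\mathrm{osc}(g)$, since $(1-\gamma)^{-n_0}$ does not automatically absorb it (e.g. when $n_0=1$). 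A second, very minor point: Corollary \ref{Cor} requires $n>N_{j,m-1}$ strictly, so take $m-1=\lceil n/n_0\rceil-1$ rather than $\lfloor n/n_0\rfloor$; the generous prefactor then covers the rounding exactly as you say. None of this affects the later applications, which only use geometric decay up to a multiplicative constant.
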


Now, let us return to the random setting.

\subsubsection*{\textbf{Proof of Proposition \ref{LL}}.}  By \cite[Lemma 4.9]{YH} and under Assumption \ref{Ass}, we have: 
\begin{lemma}\label{L1}
(i) If   $b_r=O(r^{-a})$, let $\beta>0$ and $d\geq 1$ be such that $a>3+\beta d$. Then, there exists $K\in L^d$ such that 
$$
(1-\delta)^{\sum_{j=1}^{[n/M]-1}\mathbb{I}({\te^{-jM}\om\in A})}\leq K(\om)n^{-\beta}.
$$
Consequently,
$$
\sup_{\Gamma}\sup_x\left|P_{\te^{-n}\om,n}(x,\Gamma)-\mu_\om(\Gamma)\right|\leq K(\om)n^{-\beta}.
$$
\vskip0.1cm
(ii) If $b_r=O(e^{-ar^\eta})$, then for every $0<\zeta<\frac{\eta}{1+\eta}$,  there exists a random variable $K\in\cap_{q\geq1} L^q$ such that 
$$
(1-\delta)^{\sum_{j=1}^{[n/M]-1}\mathbb{I}{(\te^{-jM}\om\in A)}}\leq K(\om)e^{-n^\zeta}.
$$
Consequently,
$$
\sup_{\Gamma}\sup_x\left|P_{\te^{-n}\om,n}(x,\Gamma)-\mu_\om(\Gamma)\right|\leq K(\om)e^{-n^\zeta}.
$$
\end{lemma}
Note that in the case when $P_\omega$ and $\gamma_0$ depend only on $\om_0$, we can take $\zeta$ arbitrarily close to $1$.
\begin{corollary}
(i) In the circumstances of  Lemma \ref{L1} (i), for every $\varepsilon>1/p$ we have  
$$
\sup_{\Gamma}\sup_x\left|P_{\om,n}(x,\Gamma)-\mu_{\te^n\om}(\Gamma)\right|\leq \tilde K_\varepsilon(\om)\,n^{-\beta+\varepsilon},
$$
where $\tilde K_\varepsilon\in L^p$.
\vskip0.1cm
(ii) In the circumstances of  Lemma \ref{L1} (ii), for every $q>1$ and $0<\zeta<\frac{\eta}{1+\eta}$, there exists $\tilde K=\tilde K_{q,\zeta}\in L^q$ such that 
$$
\sup_{\Gamma}\sup_x\left|P_{\om,n}(x,\Gamma)-\mu_{\te^n\om}(\Gamma)\right|\leq \tilde K(\om)e^{-n^\zeta}.
$$
\end{corollary}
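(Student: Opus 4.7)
The plan is to convert the backward-in-time bound from Lemma \ref{L} into a forward-in-time bound by invertibility and $\bbP$-preservation of $\te$ on $\Omega = \cY^\bbZ$, and then to replace the resulting cocycle-type factor $K(\te^n\om)$ by a single random constant $\tilde K(\om)$ at the cost of a mild $n^\varepsilon$ (resp.\ slightly slower stretched-exponential) factor.

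For part (i), first apply Lemma \ref{L}(i) with $d = p$, which is admissible under $a > 3 + \beta p$, to obtain $K \in L^p$. Substituting $\om \mapsto \te^n\om$ in the displayed inequality of Lemma \ref{L}(i), and writing $\mu_{\te^n\om}$ in place of $\mu_\om$ so that the measure lives on the correct space $\cX_{\te^n\om}$, yields
$$
\sup_\Gamma \sup_x \bigl|P_{\om,n}(x,\Gamma) - \mu_{\te^n\om}(\Gamma)\bigr| \leq K(\te^n\om)\, n^{-\beta}.
$$
Define $\tilde K_\varepsilon(\om) := \sup_{n \geq 1} K(\te^n\om)\, n^{-\varepsilon}$. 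Stationarity of $\bbP$ under $\te$ gives
$$
\bbE[\tilde K_\varepsilon^p] \leq \sum_{n \geq 1} n^{-\varepsilon p}\, \bbE[K(\te^n\om)^p] = \bbE[K^p] \sum_{n \geq 1} n^{-\varepsilon p} < \infty
$$
whenever $\varepsilon p > 1$, so $\tilde K_\varepsilon \in L^p$. The pointwise bound $K(\te^n\om) \leq \tilde K_\varepsilon(\om)\, n^\varepsilon$ then produces the claimed $\tilde K_\varepsilon(\om)\, n^{-\beta + \varepsilon}$ estimate.

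For part (ii) the same shift trick applies, but the trade must happen on the exponential scale. Fix $\kappa < \frac{\eta}{1+\eta}$ and choose $\kappa_1 \in (\kappa, \frac{\eta}{1+\eta})$. Apply Lemma \ref{L}(ii) with $\kappa_1$ in place of $\kappa$ to get $K_1 \in \cap_q L^q$, then shift to obtain a bound of the form $K_1(\te^n\om)\, e^{-n^{\kappa_1}}$. Setting
$$
\tilde K(\om) := \sup_{n \geq 1} K_1(\te^n\om)\, e^{-(n^{\kappa_1} - n^\kappa)},
$$
the same $q$-th moment computation works because $n^{\kappa_1} - n^\kappa \to \infty$, so $\sum_n e^{-q(n^{\kappa_1} - n^\kappa)} < \infty$ for every $q \geq 1$. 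Hence $\tilde K \in \cap_q L^q$, and $K_1(\te^n\om) e^{-n^{\kappa_1}} \leq \tilde K(\om) e^{-n^\kappa}$ pointwise, proving the claim.

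There is no substantial obstacle beyond this bookkeeping; the only subtlety is that the strict inequality $\varepsilon > 1/p$ (resp.\ $\kappa_1 > \kappa$) is precisely what is needed to make $\tilde K$ have the claimed integrability. Everything else is a routine application of shift invariance of $\bbP$ together with the identification $\te^{-n}(\te^n\om) = \om$.
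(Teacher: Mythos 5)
Your argument is correct and is essentially the paper's own proof: substitute $\te^n\om$ for $\om$ in Lemma \ref{L}, define $\tilde K$ as $\sup_{n\geq 1}$ of the shifted $K(\te^n\om)$ with a small weight ($n^{-\varepsilon}$, resp.\ $e^{-(n^{\kappa_1}-n^{\kappa})}$), and use $\te$-invariance of $\bbP$ to bound the moments by a convergent series, the conditions $\varepsilon p>1$ (resp.\ $\kappa_1>\kappa$) being exactly what makes it converge. Your explicit handling of the stretched-exponential case and the remark that the limit measure should be read as $\mu_{\te^n\om}$ merely spell out what the paper's one-line proof leaves implicit.
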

 \begin{proof}
 Let $\tilde K(\om)=\sup_{n\geq 1}(n^{-\varepsilon}K(\te^n\om))$. Then, for every $q$ such that $K\in L^q$,
 $$
\bbE [(\tilde K(\om))^q]\leq \left\|K \right\|_{L^q}^q\sum_{n}n^{-q\varepsilon}<\infty
 $$
 if $\varepsilon>1/q$.
 \end{proof}

Now the proof of Proposition \ref{LL} is completed using that,
as before, by \cite[Ch.4]{Brad}, for any two sub-$\sig$-algebras $\cH,\cG$ of a given $\sig$-algebra,
 $$
\phi(\cH,\cG)=\sup\left\{|\bbP(B\,|\,A)-\bbP(B)|: A\in\cG, B\in\cH, \bbP(A)>0\right\}
$$
$$
=\frac12\sup\left\{\left\|\bbE\left[g\,|\,\cG\right]-\bbE\left[g\right]\right\|_{L^\infty}:g\in L^q(\cH): \left \|g \right\|_{L^\infty}\leq1\right\}. 
 $$
\qed



\subsection{Example 2: Markov Chains in Random Environments Satisfying a Random Dobrushin-type Contraction}
Henceforth, we assume that there exist random variables $n_\omega$ and $\delta_\omega<1$ such that 
$$
\delta(P_{\om,n_\omega})\leq \delta_\omega.
$$
Let us take $r<1$ and $M\in\mathbb N$ such that $\bbP(A)>0$, where $A=\{\om: n_\omega\leq M, \delta_\omega<r\}$.
\begin{lemma}\label{DubLem1}
We have 
$$
\phi_\omega(n;0)\leq r^{\prod_{j=1}^{[n/M]-1}\mathbb I(\theta^{jM}\omega\in A)}.
$$
\end{lemma}
\begin{proof}
The proof is similar to the proof of  Lemma \ref{DubLem}, but since certain modifications are needed and the proof is short, we present all the details. 
Since $\delta(PQ)\leq \delta(P)\delta(Q)$, this means that $\delta(P_{\omega,n+k})\leq \delta(P_{\om,n})\delta(P_{\te^n\om,k})$ for all $j,n,k$.  Using also that  $\delta(Q)\leq 1$, this implies that for each $k\in \mathbb Z^+$, 
\begin{align}
    \delta(P_{\om,k}) \leq \prod_{j=0}^{[k/M]-1}\delta(P_{\te^{jM}\om,M})\leq  r^{\prod_{j=1}^{[n/M]-1}\mathbb I(\theta^{jM}\omega\in A)}.
\end{align} 
Moreover, for each $x$ and $y$, for each $j,n,$ and $f\in L^{\infty}$, this gives, 
\begin{align}
    |P_{\om,n} f(x) - P_{\om,n} f(y)|&=\left|\int f d(P_{\om,n}(x, \cdot) - P_{\om,n}(y,\cdot))\right|\nonumber\\
    &\leq r^{\prod_{j=1}^{[n/M]-1}\mathbb I(\theta^{jM}\omega\in A)}\left\|f\right\|_{L^{\infty}}.\nonumber
\end{align}
This, with $\nu_\om$ denoting the law of $X_{\om}$, then gives 
\begin{align}
    \left|P_{\om,n} f(x) - \nu_{\te^n\om}(f)\right|  &= |P_{\om,n} f(x) - \int P_{\om,n}f(y) \nu_\om(dy) |\nonumber\\
    &\leq \int \left|P_{\om,n} f(x) - P_{\om,n} f(y) \right|\,\nu_\om(dy)\nonumber\\
    &\leq r^{\prod_{j=1}^{[n/M]-1}\mathbb I(\theta^{jM}\omega\in A)} \left\|f\right\|_{L^{\infty}}\nonumber.
\end{align}
Taking supremums, we obtain \begin{align}
    \left\|P_{\om,n} f - \nu_{\te^n\om}f\right\|_{L^{\infty}} \leq  r^{\prod_{j=1}^{[n/M]-1}\mathbb I(\theta^{jM}\omega\in A)}\left\|f\right\|_{L^{\infty}}\nonumber.
\end{align}
It follows that the Markov chain is exponentially $\phi$-mixing using \eqref{Phi rep}.       
\end{proof}

Next, take $\beta(\omega)=r^{\mathbb I(\omega\in A)}$. Then $\mathbb E_\bbP[\beta]<1$. 
\begin{assumption}\label{Ass11}
We assume the following condition holds.
For every $r$, there exist sets $A_r\in\cF_{-r,r}$ and $B_r\in\cF$ such that 
$$
\beta_r=\bbP(B_r)=o(r^{-1}), \,\,A_r\subseteq A\cup B_r,\,\, p_0=\lim_{r\to\infty}\bbP(A_r)>0.
$$
 Moreover, suppose that either 
$\alpha(r)=O(r\beta_r)$ or 
\begin{equation}\label{Psii1}
 (1+\limsup_{k\to\infty}\psi_U(k))\,(1-p_0(1-\delta))<1.   
\end{equation}
\end{assumption}
Notice that when $P_\omega$ and $\gamma_\om$ depend only on $\om_0$, then $A\subseteq\cF_{-M,M}$, and so we can take $B_r=\emptyset$ and $A_r=A$ (and $\delta$ small enough and $M$ large enough). Observe further that when $n_\omega$ is minimal and  $P_\omega$ and $\gamma_\om$ depend only on $\om_{-L}, \cdots, \om_L$ for a fixed $L \in \mathbb{Z}^+$, then $A\in \mathcal F_{-r,r}$ for all $r \geq L$. Thus, we may still take $A_r = A$ and $B_r = \emptyset$ (where $\delta$ is small enough and $M$ large enough so that $\mathbb P(A)>0$.)

Now,  by \cite[Lemma 4.9]{YH} and under Assumption \ref{Ass}, we have: 
\begin{lemma}\label{L2}
(i) If   $b_r=O(r^{-a})$, let $\beta>0$ and $d\geq 1$ be such that $a>3+\beta d$. Then, there exists $K\in L^d$ such that 
$$
r^{\prod_{j=1}^{[n/M]-1}\mathbb I(\theta^{jM}\omega\in A)}\leq K(\om)n^{-\beta},
$$
and so $\phi_\om(n;k)=\phi_{\te^k\om}(n;0)\leq K(\te^k\om)n^{-\beta}$.
\vskip0.1cm
(ii) If $b_r=O(e^{-ar^\eta})$, then for every $0<\zeta<\frac{\eta}{1+\eta}$,  there exists a random variable $K\in\cap_{q\geq1} L^q$ such that 
$$
r^{\prod_{j=1}^{[n/M]-1}\mathbb I(\theta^{jM}\omega\in A)}\leq K(\om)e^{-n^\zeta},
$$
and so $\phi_\om(n;k)=\phi_{\te^k\om}(n;0)\leq  K(\te^k\om)e^{-n^\zeta}$.
\end{lemma}

\subsection{Application to statistics of the random mixing times}\label{Mixing times}
Given $\ve>0$, we define 
$$
N_\ve(\om)=\min\left\{n\in\bbN: \left\|P_{\om,n}-\mu_{\te^n\om}\right\|_{L^\infty}\leq \ve\right\}.
$$
Let $a_n$ be either $O(e^{-an^\zeta})$ for some $a,\zeta>0$ or $O(n^{-A})$ for some $A>1$.  
\begin{assumption}\label{a}
Suppose that $\bbP$-a.s. for all $n\in\bbN$,
   $$
\left\|P_{\om,n}-\mu_{\te^n\om}\right\|_{L^\infty}\leq K(\omega) a_n,
    $$
    where $K(\om)\in L^p$.
\end{assumption}

\begin{theorem}
Under Assumption \ref{a}, for every $\ve>0$ we have 
$$
\bbP(\om: N_\ve(\om)>N)\leq \|K\|_{L^p}^p\,\ve^{-p}a_N^{p}.
$$
\end{theorem}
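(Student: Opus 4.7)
The plan is to reduce the event $\{N_\ve(\om)>N\}$ to a tail event for the random variable $K$ controlled by Assumption~\ref{a}, and then invoke Markov's inequality in $L^p$. There is no serious obstacle here; the entire argument is essentially unpacking the definition of $N_\ve$ and applying a one-line moment bound.

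First I would observe that by the very definition $N_\ve(\om)=\min\{n\in\bbN:\|P_{\om,n}-\mu_{\te^n\om}\|_{L^\infty}\leq\ve\}$, the inequality $N_\ve(\om)>N$ forces the ``mixing not yet achieved'' condition at $n=N$, i.e.\
$$
\|P_{\om,N}-\mu_{\te^N\om}\|_{L^\infty}>\ve.
$$
Combining this with the bound in Assumption~\ref{a} evaluated at $n=N$ yields
$$
\ve<\|P_{\om,N}-\mu_{\te^N\om}\|_{L^\infty}\leq K(\om)\,a_N,
$$
so that $K(\om)>\ve/a_N$. Consequently we have the inclusion of events
$$
\{\om:N_\ve(\om)>N\}\subseteq\{\om:K(\om)>\ve/a_N\}.
$$

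The second step is to apply Markov's inequality to the nonnegative random variable $K^p$: since $K\in L^p(\Om,\cF,\bbP)$,
$$
\bbP\bigl(K(\om)>\ve/a_N\bigr)=\bbP\bigl(K(\om)^p>(\ve/a_N)^p\bigr)\leq\frac{\bbE[K^p]}{(\ve/a_N)^p}=\|K\|_{L^p}^{p}\,\ve^{-p}\,a_N^{p}.
$$
Combining this with the inclusion above gives the desired estimate
$$
\bbP(N_\ve(\om)>N)\leq\|K\|_{L^p}^{p}\,\ve^{-p}\,a_N^{p},
$$
completing the proof. The whole argument is a pointwise comparison followed by a moment inequality, and no subtlety in $\om$-measurability arises because $K$ is given as a measurable function by Assumption~\ref{a}.
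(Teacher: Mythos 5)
Your proof is correct and follows essentially the same route as the paper: include $\{N_\ve(\om)>N\}$ in the tail event $\{K(\om)\geq \ve a_N^{-1}\}$ via Assumption \ref{a} and then apply Markov's inequality to $K^p$. The only cosmetic difference is that you use the failure of the mixing bound only at $n=N$, whereas the paper records it for all $n\leq N$ before reducing to the same set; both yield the stated estimate.
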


\begin{proof}
We have 
$$
\left\{\om: N_\ve(\om)>N\right\}=\left\{\om: \left\|P_{\om,n}-\mu_\om \right\|_{L^\infty}\geq \ve,\,\,\forall n\leq N\right\}
\subseteq\left\{\om: K(\om)a_n\geq \ve,\,\,\forall n\leq N\right\}
$$
$$
=\left\{\om: K(\omega)\geq \ve a_N^{-1}\right\}.
$$
Thus, the result follows by the Markov inequality.
\end{proof}

\subsection{Skew Products}\label{skew product section}
Let us denote $\cZ_\om=\prod_{k\in\bbZ}\cX_{\theta^k\om}\subseteq\cZ=\cX^\bbZ$. 
 Let us define the skew product sequence $Z_n(\omega,z)$ by 
$$
Z_n(\om,z)=\left(\te^n\omega,(z_{j+n})_{j
\in \bbZ}\right), \, z=(z_n)\in\cZ.
$$
 Noting that $(P_\om)^*\mu_{\te\om}=\mu_\om$, $\bbP$-a.s., we see that $\mu_\om$ is a Markov measure on $\cY_\om$, and we can consider Markov chains $X_\om:=\{X_{\om,k}\}_{k\in\bbZ}$ whose law on $\cY_\om$ is $\mu_\om$. Such chains have transition probabilities $P_{\te^k\om}$.
 
Let $\kappa_\omega$ denote the law of the Markov chain $(X_{\om,k})_k$ on $\cZ_\om$. 
Let us view $Z_n$ as a sequence of random variables 
with respect to the measure $\kappa=\int\kappa_\om d\bbP(\omega)$. Now, define the projection onto the $n$-th coordinates $\pi_n:\Omega\times\cZ\to\cY\times\cX$ by
$$
\pi_n(\om,z)=(\om_n,z_n).
$$
 Then $\pi_n=\pi_0\circ Z_n$. Let  $\alpha_\pi$ denote the $\alpha$-mixing coefficients of $(\pi_n)$ and for each fixed environment $\omega$, let $\phi_\omega(n)$ denote the
$\phi$-mixing coefficients of the stationary Markov chain
$X_\omega$.
\begin{proposition}\label{prp}
Suppose that $P_\om$ depends only on $\om_0$.
Assume also that $\phi_\om(n)\leq K(\omega)b_n$, where either $b_n=e^{-an^\zeta}, a,\zeta>0$, or $b_n=n^{-c}, c>0$ and $K\in L^p, p\geq 1$. Then,
$$
\alpha_{\pi}(n)=O(b_{[n/2]}+\alpha([n/2])),
$$
where $\alpha(\cdot)$ are the $\alpha$-mixing coefficients of $(Y_j)$.
\end{proposition}
\begin{proof}[Proof of Proposition \ref{prp}]
In the course of the proof, we will use some ideas from \cite{Skew}. Let us take two measurable functions $G,H$ such that $|G|,|H|\leq1$, and write 
$$
\mathbb E_{\kappa}\left[G(\pi_{j+n},\pi_{j+n+1}, \cdots)\,H(\cdots,\pi_{j-1},\pi_{j})\right]
$$
$$
=\int\kappa_{\omega}\left(G (\om_{j+n,\infty},(X_{\om,s})_{s\geq j+n}\right)\, H\left(\omega_{-\infty,j}, (X_{\om,k})_{k\leq j}\right)d\mathbb P(\omega),
$$
where $\om_{k,\ell}=(\om_s)_{k\leq s\leq\ell}$ (where $s\in\mathbb R$). Now, we have 
$$
\Big|\kappa_{\omega}(G(\om_{j+n,\infty},(X_{\om,s})_{s\geq j+n})\, H(\omega_{-\infty,j}, (X_{\om,k})_{k\leq j})
$$
$$
-\kappa_{\omega}(G(\om_{j+n,\infty},(X_{\om,s})_{s\geq j+n}))\,\kappa_\om(H(\omega_{-\infty,j}, (X_{\om,k})_{k\leq j})\Big|\leq K(\te^j\om)b_n.
$$
Now, since $p\geq 1$, we see that 
$$
\Big|
\mathbb E_{\kappa} \left[G(\pi_{j+n},\pi_{j+n+1}, \cdots)\, H( \cdots,\pi_{j-1},\pi_{j})\right]
$$
$$
-\int \kappa_{\omega}(G(\om_{j+n,\infty},(X_{\om,s})_{s\geq j+n}))\,\kappa_\om(H(\omega_{-\infty,j}, (X_{\om,k})_{k\leq j})d\bbP(\om)\Big|\leq Cb_n.
$$
Next, denote 
$$
\tilde G(\om)=\kappa_{\omega}\left(G(\om_{j+n,\infty},(X_{\om,s})_{s\geq j+n})\right), \,\,\,\tilde H(\omega)=\kappa_\om\left(H(\omega_{-\infty,j}, (X_{\om,k})_{k\leq j}\right).
$$
Then, since $\mu_\omega$ depends only on $\om_s, s\leq 0$ (as it is the limit of $P_{\te^{-n}\om,n}$), we see that $\tilde H$ is measurable with respect to $\cF_{-\infty,j}$. Next, notice that 
$$
\tilde G(\om)=\kappa_{\te^{j+n}\om} \left(G(\omega_{j+n,\infty},(X_{\te^{j+n}\om,s})_{s\geq 0}\right)
$$
$$
=\int \mathbb E \left[G(\omega_{j+n,\infty},(X_{\te^{j+n}\om,s})_{s\geq 0})\,|\,X_{\te^{j+n}\om}=x\right]\,d\mu_{\te^{j+n}\om}(x).
$$
Now, the function $\mathbb E[G(\omega_{j+n,\infty},(X_{\te^{j+n}\om,s})_{s\geq 0})\,|\,X_{\te^{j+n}\om}=x]$ is bounded by 1 in absolute value, and it depends only on $\om_s,s\geq j+n$, since $P_{\om}$ depends only on $\om_0$. Moreover, since 
$$
\left\|\mu_{\te^{j+n}\om}-P_{\te^{j+n-[n/2]}\om,[n/2]}\right\|_{L^\infty}\leq A(\te^{j+[n/2]}\om)\, b_{n-[n/2]}
$$
for $A\in L^p$,
and because $P_{\te^{j+n-[n/2]}\om,[n/2]}$ is measurable with respect to $\cF_{j+[n/2],\infty}$, we conclude by the minimization property of conditional expectations and since $A\in L^p$ that 
\begin{equation}\label{uuu}
\left\|\tilde G-\mathbb E[\tilde G \,|\,\cF_{j+[n/2],\infty}]\right\|_{L^p}\leq\left\|A\right\|_{L^p}\,b_{n-[n/2]}\leq Cb_{[n/2]}.   
\end{equation}
Now, recall that (see \cite{Hall}) for every $A\in L^\infty(\cF_{-\infty,j})$ and $B\in L^\infty(\cF_{j+m,\infty})$, we have 
$$
\left|\text{Cov}_{\mathbb P}(A,B)\right|\leq4\, \left\|A\right\|_{L^\infty}\left\|B\right\|_{L^\infty}\alpha(m). 
$$
Using this, \eqref{uuu}, and that $|G|,|H|\leq1$, we see that
$$
\left|\text{Cov}_{\mathbb P}(\tilde G,\tilde H)\right|\leq 2\,Cb_{[n/2]}+4\,\alpha\left([n/2]\right),
$$
and the proof of the proposition is complete.
\end{proof}

Proposition \ref{prp} applies to processes with sufficiently fast $\alpha$-mixing rates, which naturally occur in numerous processes. For example, geometrically ergodic Markov chains (such as those satisfying the Doeblin condition or the Dobrushin contraction condition of this paper), or irreducible and aperiodic chains on finite-state spaces apply here. Also, mixing subshifts of finite type with a Gibbs measure are applicable because they are known to have exponential $\psi$-mixing, which implies exponential $\alpha$-mixing. Also, Proposition \ref{prp} can be applied to the bilinear models, ARMA processes, and ARX$(k,q)$ nonlinear processes (all under the assumptions of \cite[Section 2.4]{doukhan1995mixing}).

\section{Applications of Proposition \ref{prp}}
The main benefit of Proposition \ref{prp} is that it allows us to apply $\alpha$-mixing estimates to skew product processes. In this section, we include representative applications of Proposition \ref{prp}. These applications demonstrate that Proposition \ref{prp} is useful in transferring classical and nonconventional annealed limit theorems for $\alpha$-mixing sequences (which has a well-developed literature of results) to skew product sequences obtained from Markov chains in random environments. 
\subsection{A Nonconventional Functional CLT} Applying \cite[Theorem 2.3]{kifer2014nonconventional} to the stationary process $(\pi_j)$ (and referring to the discussion after the theorem statement for $\alpha$-mixing estimates), we obtain the following nonconventional functional CLT as an application of Proposition \ref{prp}.

\begin{corollary}
    In the context of Proposition \ref{prp},
suppose Proposition \ref{prp} gives $\alpha_\pi (n) \leq C n^{-p}$ for large enough $p$.
    Let $F: (\mathcal{Y} \times \mathcal X)^\ell \to \mathbb R$ satisfy the assumptions of  \cite[Theorem 2.3]{kifer2014nonconventional},  let $F_i$ denote the functions induced from $F$ as in \cite[(2.17), (2.18)]{kifer2014nonconventional}, 
    and let  $q_i$ be functions satisfying \cite[(2.9), (2.10),  and (2.11)]{kifer2014nonconventional}.
   For $t\in [0,1]$, define 
   \begin{align*}
       \xi_{n}(t) = \sum_{i=1}^k \xi_{i,n}(it) + \sum_{i=k+1}^\ell \xi_{i,n}(t),
   \end{align*}
   where for $1\leq i \leq k$,

   \begin{align*}
       \xi_{i,n}(t) = \frac{1}{\sqrt{n}}\sum_{j=1}^{[ nt/i ]}F_i \left(\pi_j, \pi_{2j}, \cdots, \pi_{ij} \right),
   \end{align*}
       and for $i\geq k+1$,
   \begin{align*}
       \xi_{i,n}(t) = \frac{1}{\sqrt{n}} \sum_{j=1}^{[ nt ]} F_i \left(\pi_{q_1(j)}, \cdots, \pi_{q_i(j)} \right).
   \end{align*} Then, the vector-valued process
   $(\xi_{1,n},\cdots, \xi_{\ell, n})$ converges jointly in distribution as $n\to \infty$ to the Gaussian process $(\eta_1, \cdots, \eta_\ell)$ with stationary independent increments and covariances given by \cite[Proposition 4.1]{kifer2014nonconventional}. 
   In particular, $\xi_n(\cdot)$ converges to the Gaussian process $\xi(\cdot)$ given by $$\xi(t) = \sum_{i=1}^k \eta_i(it) + \sum_{i=k+1}^\ell \eta_i(t).$$
\begin{proof}
    Proposition \ref{prp} provides the required polynomial $\alpha_\pi$-mixing rates. Therefore,  the assumptions of \cite[Theorem 2.3]{kifer2014nonconventional} are satisfied, and the proof is complete.
\end{proof}
   
\end{corollary}

\subsection{A Nonconventional Moderate Deviations Principle}

We apply the cumulant estimates of \cite[Corollary 3.2]{hafouta2020nonconventional} to obtain the following nonconventional moderate deviations principle.

\begin{corollary}
    In the setting of Proposition \ref{prp}, suppose that Proposition \ref{prp} gives $\alpha_\pi(m)\leq de^{-cm^\eta}$ for all $m$ for constants $c, \eta>0$ and $d\geq 1$. Let $$S_n = \sum_{j=1}^n F \left(\pi_{q_1(j)}, \cdots, \pi_{q_\ell(j)}\right),$$ where $F$ and the $q_i$ satisfy the assumptions for the extension of \cite[Theorem 2.6]{hafouta2020nonconventional} in the setting of sub-exponentially $\alpha$-mixing sequences of bounded product functions described in \cite[Section 3.4.1]{hafouta2020nonconventional}. Then, the normalized sum satisfies the conclusions of \cite[Theorem 2.6]{hafouta2020nonconventional}.
\end{corollary}

\begin{proof}
    Proposition \ref{prp} provides the required stretched exponential $\alpha_\pi$-mixing estimates. Therefore, the assumptions of the extension of \cite[Theorem 2.6]{hafouta2020nonconventional} to the setting of $\alpha$-mixing sequences of bounded product functions described in \cite[Section 3.4.1]{hafouta2020nonconventional} are satisfied by our assumptions and the $\alpha$-mixing rates ensured by Proposition \ref{prp}. 
\end{proof}

\subsection{A Law of the Iterated Logarithm}
\begin{corollary}\label{lil corollary}
    In the setting of Proposition \ref{prp}, let  $S_n = \sum_{j=1}^n f(\pi_j)$ for a centered $f:\mathcal Y \times \mathcal X \to \mathbb R$. Suppose that the $\alpha_\pi$-mixing coefficients provided by Proposition \ref{prp} are such that the DMR condition of \cite{rio2017asymptotic} is satisfied for the observable $f$ (see \cite[Corollary 1.2]{rio2017asymptotic}). 
    Then, $$\limsup_{n\to \infty}\frac{|S_n|}{\sigma_n\sqrt{\log\log n}}\leq 8 \,\,\,\,\,\,\text{almost surely.}$$
\end{corollary}

\begin{proof}
    Because $(\pi_j)$ is stationary, by Proposition \ref{prp} and our assumptions, the assumptions of \cite[Theorem 6.4]{rio2017asymptotic} are satisfied, and the proof is immediate.
\end{proof}

\subsection{Other Applications} Note that the applications in this section are representative and not exhaustive, as a vast literature of $\alpha$-mixing results already exists. We provide a few more applications, but note that this is only a partial list. Beyond the applications in this section, we can obtain a strong law of large numbers (see \cite[Section 3.3]{rio2017asymptotic}) under suitable assumptions. We can also obtain an almost sure invariance principle (see \cite[Section II]{shao1987strong}) under stronger assumptions than Corollary \ref{lil corollary}, as an application. Proposition \ref{prp} can also be applied to obtain concentration inequalities for $\alpha$-mixing sequences (for instance, see \cite{rio2017asymptotic}).

\end{document}